\newtheorem{theorem}{Theorem}[section]
\newtheorem{lemma}[theorem]{Lemma}
\newtheorem{corollary}[theorem]{Corollary}
\newcommand{\ie}{\emph{i.e.}}
\newcommand{\Tmax}{\ensuremath{\mbox{Tmax}}}
\newcommand{\fig}[3][1]{
\begin{figure}[!h]
\centerline{\scalebox{#1}{\includegraphics{#2.eps}}}
\caption{\label{fig_#2} \em #3} \end{figure}}
\renewcommand{\input}[1]{}
\begin{document}
\thispagestyle{empty}

\begin{center}
\LARGE Generalized Tilings
       with Height Functions\\
\vskip 0.2cm
\bf \large
Olivier Bodini\,\footnote{LIP, \'Ecole Normale Sup\'erieure de Lyon,
                          46, All\'ee d'Italie,
                          69364 Lyon Cedex 05, France.
                          \texttt{olivier.bodini@ens-lyon.fr}}
and
Matthieu Latapy\,\footnote{LIAFA, Universit\'e Paris 7,
                           2, place Jussieu,
                           75005 Paris, France.
                           \texttt{latapy@liafa.jussieu.fr}}
\end{center}
\vskip 0.5cm

\begin{flushright}
{\em ``Height functions are cool!''}\\
Cris Moore,
DM-CCG'01 talk \cite{LMN01}.
\end{flushright}
\vskip 0.5cm

\noindent
\textbf{Abstract:}
In this paper, we introduce a generalization of a class of tilings
which appear in the literature: the tilings over which a height
function can be defined (for example, the famous
tilings of polyominoes with dominoes).
We show that many properties of these tilings
can be seen as the consequences of
properties of the generalized tilings we introduce. In particular,
we show that any tiling problem which can be modelized in our
generalized framework has the following properties: the tilability
of a region can be constructively decided in polynomial time,
the number of connected components in the undirected flip-accessibility
graph can be determined, and the directed flip-accessibility graph
induces a distributive lattice structure.
Finally, we give a few examples of known tiling problems which
can be viewed as
particular cases of the new notions we introduce.

\vskip 0.2cm

\noindent
\textbf{Keywords:}
Tilings, Height Functions, Tilability, Distributive Lattices, Random Sampling,
Potentials, Flows.

\section{Preliminaries}

Given a finite set of elementary shapes, called \emph{tiles},
a \emph{tiling} of a given region is a set of
translated tiles such that the union of the tiles covers exactly
the region, and such that there is no overlapping between any
tiles. See for example Figure~\ref{fig_ex_domino_tiling} for a tiling
of a polyomino (set of squares on a two-dimensional grid) with
dominoes ($1\times 2$ and $2\times 1$ rectangles).
Tilings are widely used in physics to modelize natural objects
and phenomena. For example, quasicrystals are modelized by Penrose
tilings \cite{CF96} and dimers on a lattice are modelized by domino
tilings \cite{Ken00}.
Tilings appeared in computer science with the famous undecidability of
the question of whether the plane is tilable or not using a given finite
set of tiles \cite{Ber66}. Since then, many studies appeared concerning
these objects, which are also strongly related to many important
combinatorial problems \cite{Lat00}.

\fig[0.5]{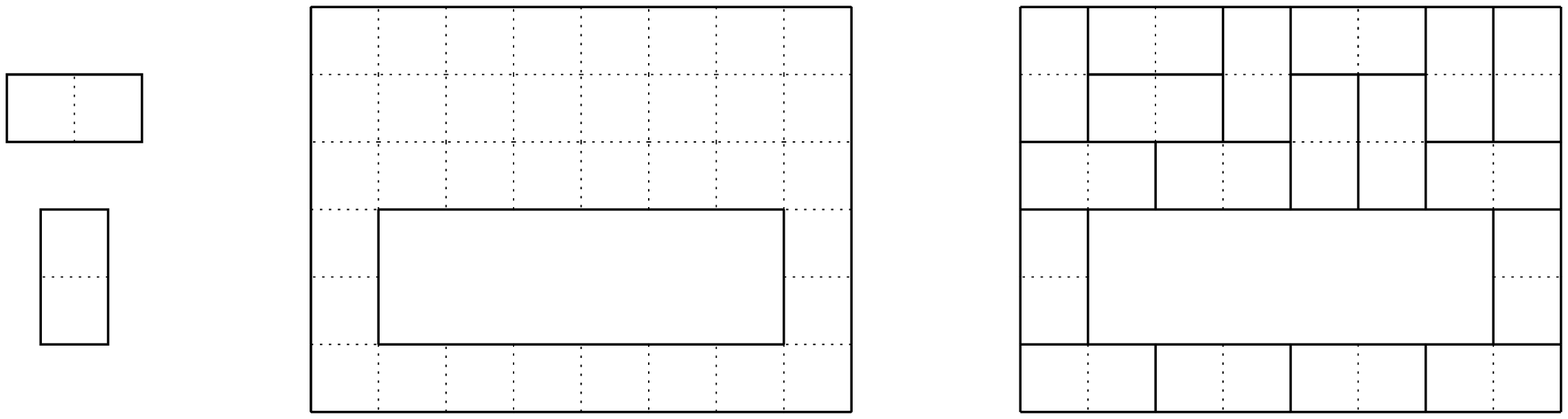}{From left to right: the two possible tiles (called
\emph{dominoes}), a polyomino (\ie\ a set of squares) to tile, and a
possible tiling of the polyomino with dominoes.}

A local
transformation is often defined over tilings. This transformation, called
\emph{flip}, is a local rearrangement of some tiles which
makes possible to obtain a new tiling from a given one.
One then defines the (undirected) \emph{flip-accessibility graph}
of the tilings of a region $R$,
denoted by $\overline{A_R}$, as follows:
the vertices of $\overline{A_R}$ are all the tilings of $R$,
and $\{t,t'\}$ is an (undirected) edge of $\overline{A_R}$ if and only if
there is a flip between $t$ and $t'$.
See Figure~\ref{fig_ex_domino_flip} for an example.
The flip notion is a key element for the enumeration of the
tilings of a given region, and for many algorithmical questions.
For example, we will see in the following that the structure of
$\overline{A_R}$ may give a way to sample randomly a tiling
of $R$, which is crucial
for physicists.
This notion is also a key element to study the entropy of the
physical object \cite{LR93}, and to examine
some of its
properties like frozen areas, weaknesses, and others \cite{JPS01}.

\fig[0.5]{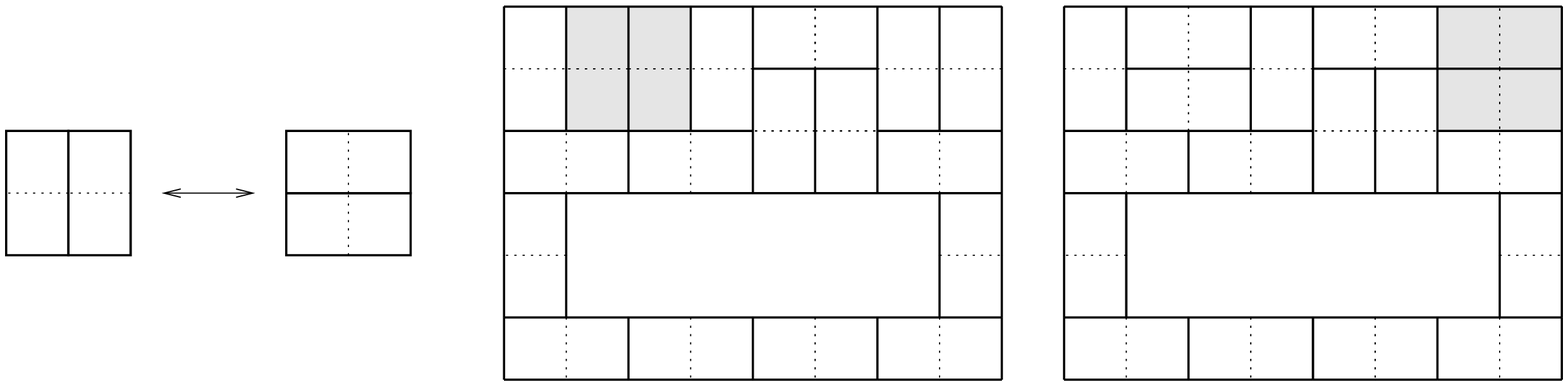}{From left to right: the flip operation
over dominoes, and two examples of tilings which can
be obtained from the one shown in Figure~\ref{fig_ex_domino_tiling}
by one flip. In these tilings, we shaded the tiles which moved during
the flip.}

On some classes of tilings which can be drawn on a regular grid,
it is possible to define a \emph{height
function} which associates an integer to any node of the grid (it
is called the \emph{height} of the point).
For example, one can define such a function over domino tilings as
follows. As already noticed, a domino tiling can be drawn on a two
dimensional square grid. We can draw the squares of the grid in black
and white like on a chessboard. Let us consider a polyomino $P$ and
a domino tiling $T$ of $P$, and let
us distinguish a particular point $p$ on the boundary of $P$, say
the one with smaller
coordinates. We say that $p$ is of height $0$, and that the height of
any other point $p'$ of $P$ is computed as follows: initialize a counter
to zero, and go from $p$ to
$p'$ using a path composed only of edges of dominoes in $T$,
increasing the counter when the square on the right is black and 
decreasing it when the square is white. The height of $p'$ is the
value of the counter when one reaches $p'$.
One can prove that this definition
is consistent and can be used as the height function for domino tilings
\cite{Thu90}. See
Figure~\ref{fig_ex_domino_height_graph} for an example.

\fig[0.5]{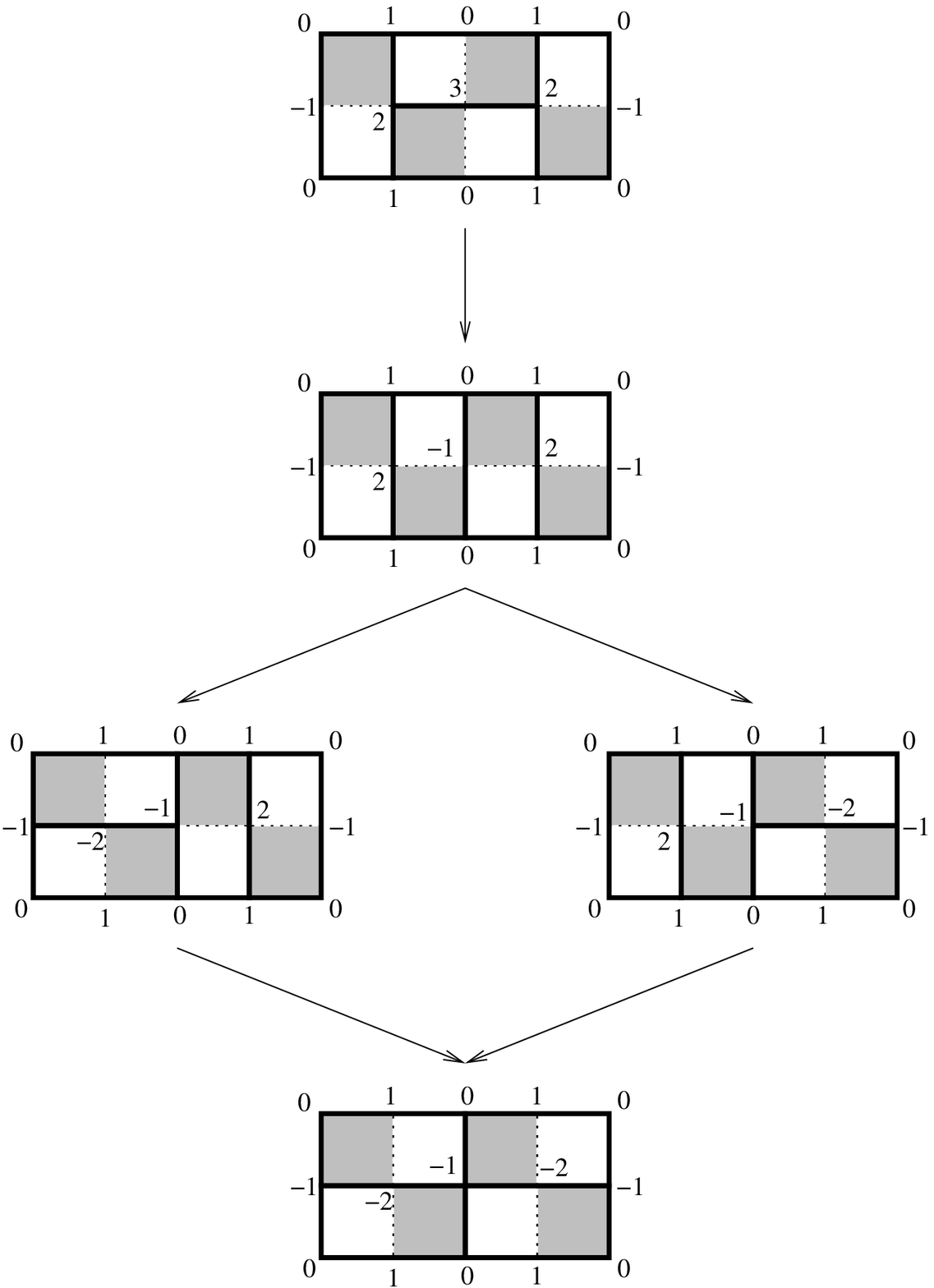}{The directed flip-accessibility graph of 
the tilings of a polyomino by dominoes. The height of each point 
of the polyomino is shown for each tiling. The set of all the tilings 
of this polyomino is ordered by the flip relation directed with respect
to the height functions.}

These height functions make it possible to define $A_R$, the \emph{directed}
flip-accessibility graph of the tilings of $R$: the vertices of $A_R$
are the tilings of $R$ and there is a directed edge $(t,t')$ if and only
if $t$ can be transformed into $t'$ by a flip which decreases the sum of
the heights of all the points. See Figure~\ref{fig_ex_domino_height_graph}
for an example with
domino tilings.
The generalized tilings we introduce in
this paper are based on these height functions, and most of our results
are induced by them.

These notions of height functions are close
to classical notions of flows theory in graphs.
Let $G=(V,E)$ be a directed graph. A {\em flow} on $G$ is a map
from $E$ into $\mathbb C$ (actually, we will only use flows with
values in $\mathbb Z$). Given two vertices $v$ and $v'$ of $G$,
a {\em travel} from $s$ to $s'$ is a set of edges of $G$ such that, if
one forgets their orientations, then one obtains a path from $s$ to $s'$.
Given a flow $C$, the {\em flux} of $C$ on the travel $T$ is
$$F_T(C)\ =\ \sum_{e\in T^+} C(e)\ -\ \sum_{e\in T^-} C(e)$$
where $T^+$ is the set of vertices of $T$ which are traveled
in the right direction when one goes from $s$ to $s'$, and $T^-$
is the set of vertices traveled in the reverse direction.
One can easily notice that the flux is additive by concatenation
of travels: 
if $T_1$ and $T_2$ are two travels such that the ending point of $T_1$ is equal
to the starting point of $T_2$, then
$F_{T_1\cdot T_2}(C) \ = \ F_{T_1}(C) + F_{T_2}(C)$.
See \cite{Ahu93} for more details about flows theory in graphs.

Since there is no circuit in the graph $A_R$ (there exists no nonempty sequence
of flips which transforms a tiling into itself), it induces an
order relation over all the tilings of $R$: $t \le t'$ if and only if
$t'$ can be obtained from $t$ by a sequence of (directed) flips. In
Section~\ref{sec_AP}, we will study $A_R$ under the order
theory point of view, and we will meet some special classes of orders,
which we introduce now.
A lattice is an order $L$ such that any two elements
$x$ and $y$ of $L$ have
a greatest lower bound, called the \emph{infimum} of $x$ and $y$
and denoted by $x \wedge y$, and a lowest greater bound, called
the \emph{supremum} of $x$ and $y$ and denoted by $x \vee y$.
The infimum of $x$ and $y$ is nothing but the greatest element
among the ones which are lower than both $x$ and $y$. The supremum
is defined dually.
A lattice $L$ is \emph{distributive} if for all $x$, $y$ and $z$ in $L$,
$x \vee (y \wedge z) = (x \vee y) \wedge (x \vee z)$ and
$x \wedge (y \vee z) = (x \wedge y) \vee (x \wedge z)$.
For example, it is known that the flip-accessibility graph
of the domino tilings of a polyomino without holes is always
a distributive lattice \cite{Rem99b}. Therefore, this is the case of
the flip-accessibility graph
shown in Figure~\ref{fig_ex_domino_height_graph} (notice that the
maximal element of the order is at the bottom, and the minimal one
at the top of the diagram since we used the discrete dynamical models
convention: the flips go from top to bottom).
Lattices (and especially distributive lattices)
are strongly structured sets.
Their study is an important part
of order theory, and many results about them exist. In particular,
various codings and algorithms are known about lattices and distributive
lattices. For example, there exists a generic algorithm to sample
randomly an element of any distributive lattice \cite{Pro98}.
For more details about orders and lattices, we refer to \cite{DP90}.

Finally, let us introduce a useful notation about graphs.
Given a directed graph $G=(V,E)$, the undirected graph
$\overline{G}=(\overline{V},\overline{E})$ is the graph
obtained from $G$ by removing the orientations of the
edges. In other words, $\overline{V}=V$, and $\overline{E}$
is the set of undirected edges $\{v,v'\}$ such that $(v,v') \in E$.
We will also call $\overline{G}$ the \emph{undirected version}
of $G$. Notice that this is consistent with our definitions
of $A_R$ and $\overline{A_R}$.

In this paper, we introduce a generalization of tilings
on which a height function can be defined,
and show how some known results may be understood in
this more general context.
All along this paper, like we did in
the present section, we will use the tilings with dominoes
as a reference to illustrate our definitions and results.
We used this unique example because it is very famous and
simple, and permits to give clear figures. We emphasize
however on the fact that our definitions and results are much
more general, as explained in the last section of the paper.

\section{Generalized tilings.}

In this section, we give all the definitions of the generalized
notions we introduce, starting from the objects we tile
to the notions of tilings, height functions, and flips.
The first definitions are very general, therefore we will
only consider some classes of the obtained objects, in
order to make the more specific notions (mainly height
functions and flips) relevant in this context.
However, the general objects introduced may be useful
in other cases.

Let $G$ be a simple ($G$ has no multiple edges, no loops,
and if $(v,v')$ is an edge then $(v',v)$ can not be an edge) directed graph.
We consider a set $\Theta$ of elementary circuits of $G$, which
we will call {\em cells}.
Then, a {\em polycell} is any set of cells in $\Theta$.
Given a polycell $P$, we call the edges of cells in $P$ the
\emph{edges of $P$}, and their vertices the \emph{vertices of $P$}.
A polycell $P$ is {\em $k$-regular} if and only if there exists
an integer $k$ such that
each cell of $P$ is a circuit of length $k$. 
Given a polycell $P$, the \emph{boundary} of $P$, denoted by
$\partial P$, is a (arbitrarily) distinguished set of
edges of $P$.
We say that a vertex of $P$ is {\em on the boundary} of $P$
if it is the extremity
of an edge in $\partial P$. 
A polycell $P$ is {\em full} if the undirected boundary
$\overline{\partial P}$
is connected.

Given an edge $e$ of $P$ which is not in $\partial P$,
we call the set of all the cells in $P$ which have $e$ in common a {\em tile}.
We will always suppose that, given any set of cells of $P$, they have at most
one edge in common.
A \emph{tiling} $Q$ of a polycell $P$ is then a partition of the set of
cells of $P$ into tiles. 
A polycell $P$ which admits at least a tiling $Q$ is {\em tilable}.
Notice that, if one considers a tiling $Q$, then one has a natural
bijection $\pi$ between the tiles of $Q$ and a set of edges of $G$:
if $t$ is a tile in $Q$, then $\pi(t)$ is nothing but the edge which
is in each of the cells which define $t$ (recall that
we have made the assumption
that this edge is unique).
The edges in $\pi(Q) = \{ \pi(t),\ t \in Q \}$ are called
the {\em tiling edges} of $Q$.
See Figure~\ref{fig_ex_polycell_strange} and
Figure~\ref{fig_ex_polycell_tiling} for some examples.
Notice that if we distinguish exactly one edge of each cell
of a polycell $P$, then the distinguished edges can be viewed
as the tiling edges of $P$. Indeed, each edge induces a tile
(the set of cells which have this edge in common),
and each cell is in exactly in one tile.

\fig[0.5]{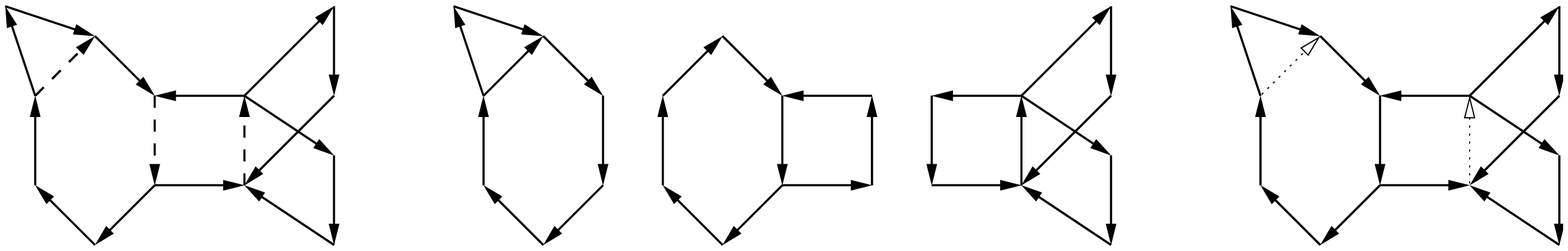}{From left to right: a polycell $P$
(the boundary $\partial P$ is composed of all the edges except the dotted ones),
the three tiles of $P$, and a tiling of $P$ represented by its tiling
edges (the dotted edges).}

\fig[0.7]{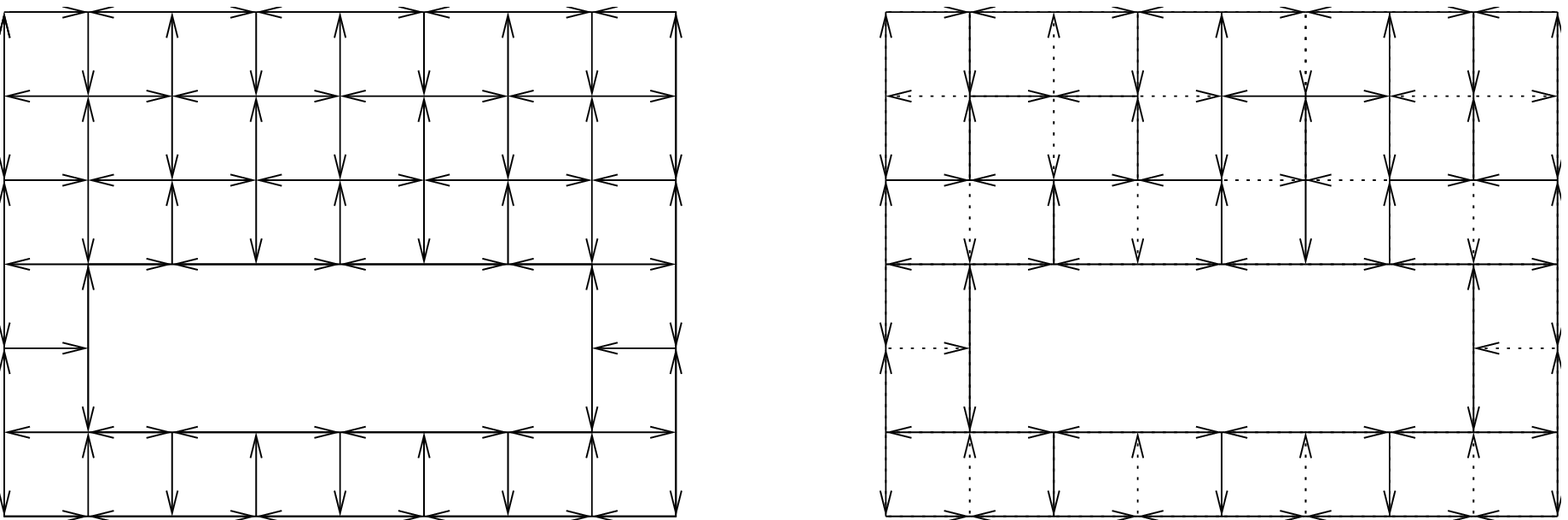}{Left: a $4$-regular polycell $P$, the boundary
of which is composed of those edges which belong to only
one cell. Right: a tiling of $P$
represented by its tiling edges (the dotted edges).
Notice that this figure is very close to
Figure~\ref{fig_ex_domino_tiling}.}

Let $P$ be a $k$-regular tilable polycell and $Q$ be a tiling of $P$.
We associate to $Q$ a flow $C_Q$ on $\Theta$ (seen as a graph):
$$
C_Q(e)= \left\{\begin{array}{ll}
                1-k & \mbox{if the edge $e$ is a tiling edge of $Q$}\\
                1   & \mbox{otherwise.}
               \end{array}\right.
$$
For each cell $c$, we define $T_c$ as the travel
which contains exactly the
edges in $c$ (in other words, it consists
in turning around $c$).
Notice that the flux of $C_Q$ on the travel $T_c$ is always null:
$F_{T_c}(C_Q)=0$ since each cell contains exactly a tiling edge,
valued $1-k$, and $k-1$ other edges, valued $1$.
Moreover, for each edge $e \in \partial P$,
we have $C_Q(e)=1$. 

Let us consider a polycell $P$ and a flow $C$ on the edges of $P$
such that $C(e)=1$ for all edge $e$ in $\partial P$.
If for all {\em closed} travel $T$ (\ie\ a cycle when one forgets the
orientation of each edge)
on the boundary of $P$ we have $F_T(C)=0$,
then we say that
$P$ has a {\em balanced boundary}. 
More specifically, if for all closed travel $T$ in $P$ (not only on the
boundary) we have $F_T(C)=0$, then the flow $C$ is called a {\em tension}.

\noindent
Finally, a polycell $P$ is {\em contractible} if it satisfies the two
following properties:
\begin{itemize}
\item $P$ has a balanced boundary.
\item $C$ is a tension if and only if for all cell $c$, $F_{T_c}(C)=0$.
\end{itemize}
Notice that if $P$ is a contractible $k$-regular polycell and $Q$
is a tiling of $P$, then
the flow $C_Q$ 
is a tension.

Now, if we (arbitrarily) distinguish a vertex $\nu$ on the
boundary of $P$, we can associate to the tension $C_Q$ a
{\em potential} $\varphi_Q$, defined over the
vertices of $P$:
\begin{itemize}
\item $\varphi_Q(\nu) = 0$.
\item for all vertices $x$ and $y$ of $P$,
      $\varphi_Q(y) - \varphi_Q(x) = F_{T_{x,y}}(C_Q)$
      where $T_{x,y}$ is a
      travel between $x$ and $y$.
\end{itemize}
The distinguished vertex is needed else
$\varphi_Q$ would only be defined at almost a constant,
but one can choose any vertex on the boundary.
Notice that this potential can be viewed as a \emph{height function}
associated to $Q$, and we will see that it indeed plays this role
in the following. Therefore, we will call the potential $\varphi_Q$
the \emph{height function} of $Q$.
See Figure~\ref{fig_ex_polycell_height} for an example.

\fig[0.7]{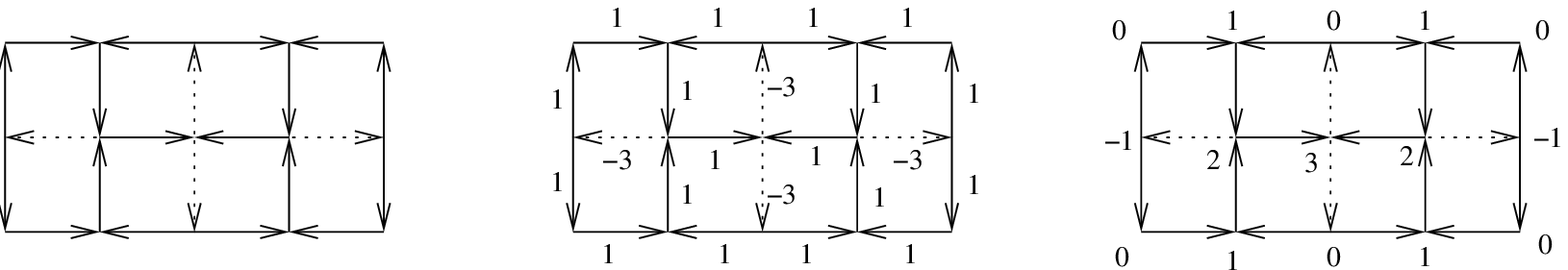}{From left to right:
a tiling $Q$ of a polycell (represented by
its tiling edges, the dotted ones), the tension $C_Q$ and the height function
(or potential) $\varphi_Q$ it induces. Again, this figure may be compared
to Figure~\ref{fig_ex_domino_height_graph} (topmost tiling).}

We now have all the main notions we need
about tilings of polycells,
including height functions, except the notion of flips.
In order to introduce it,
we need to prove the following:

\begin{theorem} \label{th:corres}
Let $P$ be a $k$-regular contractible polycell.
There is a bijection
between the tilings of $P$ and the tensions $C$
on $P$ which verify:
\begin{itemize}
\item for all edge $e$ in $\partial P$, $C(e)=1$,
\item and for all edge $e$ of $P$, $C(e) \in \{ 1-k, 1 \}$.
\end{itemize}
\end{theorem}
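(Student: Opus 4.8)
The plan is to make the correspondence explicit and verify that the two resulting maps are mutually inverse. In the forward direction the construction is already in place: to a tiling $Q$ we attach the flow $C_Q$, and I would first check that $C_Q$ belongs to the target set. The value of $C_Q$ lies in $\{1-k,1\}$ by definition, and $C_Q(e)=1$ for every $e\in\partial P$ because boundary edges are never tiling edges. The text already records that $F_{T_c}(C_Q)=0$ for every cell $c$; since $P$ is contractible, vanishing flux around every cell is exactly the condition that makes $C_Q$ a tension. Hence $Q\mapsto C_Q$ is a well-defined map into the set of tensions described in the statement.

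For the reverse direction I would start from an arbitrary tension $C$ satisfying the two conditions and reconstruct a tiling from it. The key computation concerns a single cell $c$: because $c$ is a circuit of length $k$, turning around it traverses all $k$ edges in their forward direction, so $F_{T_c}(C)=\sum_{e\in c}C(e)$. If $j$ edges of $c$ are valued $1-k$ and the remaining $k-j$ are valued $1$, this flux equals $j(1-k)+(k-j)=k(1-j)$. Since $P$ is contractible and $C$ is a tension, $F_{T_c}(C)=0$, which forces $j=1$. Thus every cell contains exactly one edge valued $1-k$, and this edge lies outside $\partial P$ because all boundary edges are valued $1$. This single arithmetic observation is the heart of the argument and the only place I expect genuine subtlety; everything else is bookkeeping.

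Once each cell carries a unique distinguished edge, namely its $(1-k)$-valued edge, the remark following the definition of tilings applies directly: distinguishing exactly one edge per cell yields a tiling whose tiling edges are these distinguished edges. I would verify that grouping the cells by their distinguished edge is a genuine partition. Indeed, any edge $e$ valued $1-k$ is an edge of $P$, hence lies in at least one cell, and every cell containing $e$ must have $e$ as its unique $(1-k)$-valued edge; therefore the tile attached to $e$ (the cells sharing $e$) coincides with the set of cells whose distinguished edge is $e$, and each cell lies in exactly one such tile. Calling the resulting tiling $Q$, its tiling edges are by construction precisely the $(1-k)$-valued edges of $C$, so $C_Q=C$.

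It remains to observe that the two maps are inverse to each other. Injectivity of $Q\mapsto C_Q$ is immediate, since a tiling is determined by its set of tiling edges through the bijection $\pi$, and those edges are read off from $C_Q$ as exactly the edges valued $1-k$; surjectivity is the reconstruction just described, which also shows that recovering a tiling from its $(1-k)$-valued edges undoes the forward map. Together these establish the claimed bijection, with contractibility entering twice, to translate between tensions and the cell-by-cell flux condition in both directions.
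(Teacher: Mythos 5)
Your proof is correct and takes essentially the same route as the paper's: the forward map $Q \mapsto C_Q$ with contractibility upgrading zero cell-fluxes to a tension, and the reverse map reading off the unique $(1-k)$-valued edge that the flux computation forces in each cell, then taking these as tiling edges. The paper's version is terser --- it omits the explicit arithmetic $j(1-k)+(k-j)=0 \Rightarrow j=1$ and the partition/mutual-inverse bookkeeping you supply --- but the underlying argument is identical.
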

\begin{proof}
For all tiling $Q$ of $P$, we have defined above
a flow $C_Q$ which verifies the property in the claim, and
such that for all cell $c$, $F_{T_c}(C_Q)=0$. Since $P$
is contractible, this last point implies that
$C_Q$ is a tension.
Conversely, let us consider a tension $C$ which satisfies the hypotheses.
Since each cell is of length $k$, and since $C(e) \in \{ 1-k, 1 \}$,
the fact that $F_{T_c}(C)=0$ implies
that each cell has exactly one negative edge.
These negative edges can be considered as the
tiling edges of a tiling of $P$, which ends the proof.
\end{proof}

Given a $k$-regular contractible polycell $P$ defined over a graph $G$,
this theorem allows us to make no distinction between a tiling $Q$
and the associated tension $C_Q$. This makes it possible to define
the notion of flip as follows.
Suppose there is a vertex $x$ in $P$ which is not on the boundary and
such that its height, with respect to the height function
of $Q$, is greater than the height of each of its
neighbors in $\overline{G}$. We
will call such a vertex a \emph{maximal} vertex.
The neighbors of $x$ in $\overline{G}$
have a smaller height than $x$,
therefore the outgoing edges
of $x$ in $G$ are tiling edges of $Q$ and 
the incoming edges of $x$ in $G$
are not. 
Let us consider function $C_{Q'}$
defined as follows:
$$
C_{Q'}(e) =
  \left\{\begin{array}{ll}
   1-k    & \mbox{if $e$ is an outgoing edge of $x$}\\
   1      & \mbox{if $e$ is an incoming edge of $x$}\\
   C_Q(e) & \mbox{else.}
  \end{array}\right.
$$
Each cell $c$ which contains $x$
contains exactly one outgoing edge of $x$
and one incoming edge of $x$, therefore we still
have $F_{T_c}(C_{Q'}) = 0$.
Therefore, $C_{Q'}$ is a tension,
and so it induces from Theorem~\ref{th:corres} a tiling $Q'$.
We say that $Q'$ is obtained from $Q$ by a \emph{flip around $x$},
or simply by a \emph{flip}.
Notice that $Q'$ can also be defined as the tiling associated to the
height function obtained from the one of $Q$ by decreasing the height
of $x$ by $k$, and without changing anything else. This corresponds
to what happens with classical tilings (see for example \cite{Rem99b}).
See Figure~\ref{fig_ex_polycell_flip} for an example.

\fig[0.7]{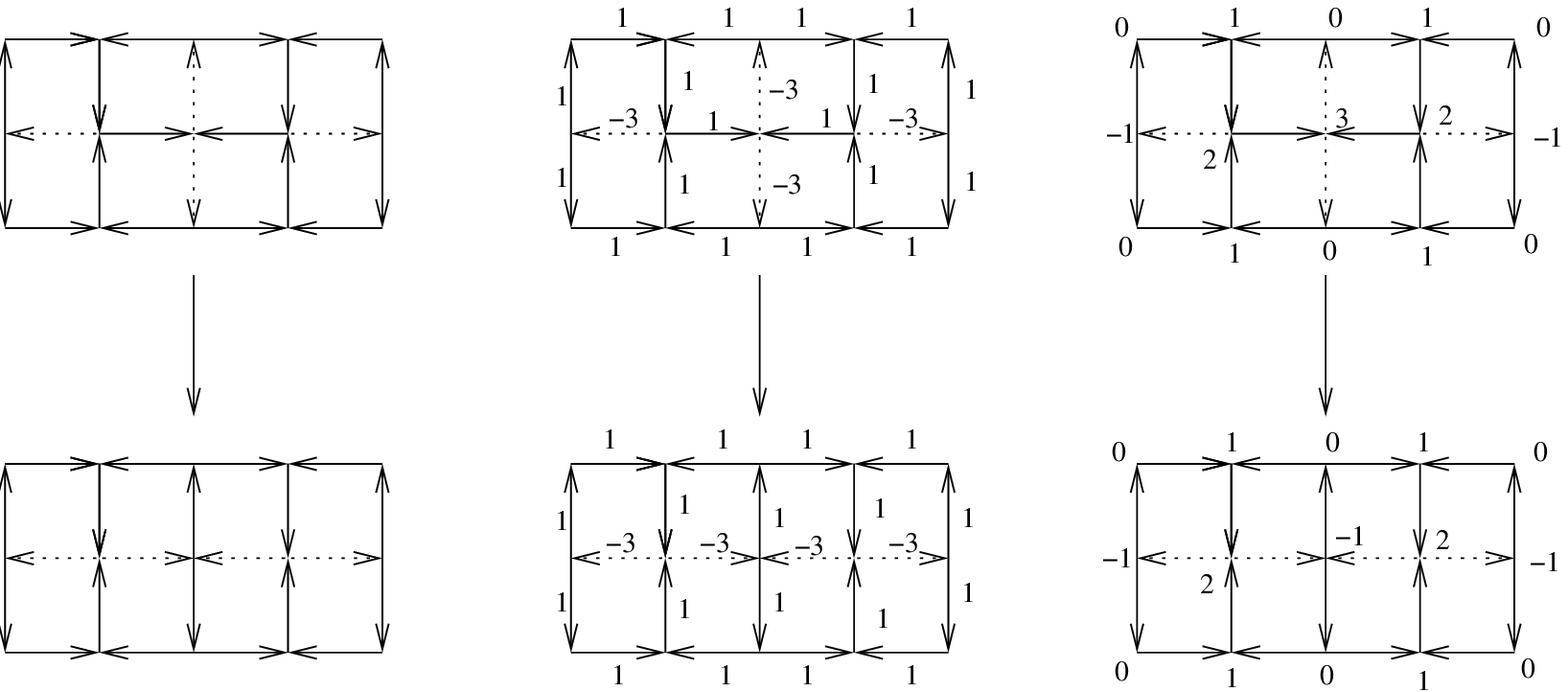}{A flip which transforms a tiling $Q$ 
of a polycell $P$ into
another tiling $Q'$ of $P$. From left to right, the flip is represented
between the tilings, then between the associated tensions, and finally
between the associated height functions.}

We now have all the material needed to define and study
$A_P$, the (directed) flip-accessibility graph of the tilings of $P$:
$A_P=(V_P,E_P)$ is the directed graph where $V_P$ is the set
of all the tilings of $P$ and 
$(Q,Q')$ is an edge in $E_P$ if $Q$ can be transformed into
$Q'$ by a flip.
We will also study the undirected flip-accessibility graph
$\overline{A_P}$.
The properties of these graphs are crucial for many questions
about tilings, like enumeration, generation and sampling.

\section{Structure of the flip-accessibility graph.}
\label{sec_AP}

Let us consider a $k$-regular contractible polycell $P$
and a tiling $Q$ of $P$. Let $h$ be the maximal value among
the heights of all the points with respect to the height
function of $Q$.
If $Q$ is such that all
the vertices of height $h$
are on the boundary of $P$, then it is said to be a \emph{maximal tiling}.
For a given $P$, we denote by $\Tmax_P$ the set of the 
maximal tilings of $P$.
We will see that these tilings play a particular role in the graph $A_P$.
In particular, we will give an explicit relation between them and
the number of connected components of $\overline{A_P}$.
Recall that we defined the maximal vertices of $Q$ as the vertices which
have a height greater than the height of each of their neighbors, with
respect to the height function of $Q$ (they are \emph{local} maximals).

\begin{lemma} \label{lm:maxinboundary}
Let $P$ be a $k$-regular tilable contractible polycell
($P$ is not necessarily full).
There exists a maximal tiling $Q$ of $P$.
\end{lemma}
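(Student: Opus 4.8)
The plan is to start from an arbitrary tiling (one exists since $P$ is tilable) and to repeatedly apply a flip to any interior vertex that attains the global maximum height, until no such vertex remains; I claim the tiling reached in this way is maximal.

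First I would fix a tiling $Q$ with height function $\varphi_Q$ and let $h$ be the maximum value of $\varphi_Q$. If $Q$ is not maximal, then by definition some vertex $x$ of height $h$ is not on the boundary of $P$. The crux is to show that such an $x$ is in fact a \emph{maximal} vertex in the sense used to define flips. For any edge $e$ incident to $x$, the definition of $\varphi_Q$ as the potential of the tension $C_Q$ gives $\varphi_Q(y)-\varphi_Q(x)\in\{C_Q(e),-C_Q(e)\}$ for the other extremity $y$, and since $C_Q(e)\in\{1-k,1\}$ we get that each neighbour of $x$ has height in $\{\varphi_Q(x)+1,\ \varphi_Q(x)-1,\ \varphi_Q(x)+(k-1),\ \varphi_Q(x)-(k-1)\}$. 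Because $k\ge 2$ (indeed $k\ge 3$, as $G$ has no loops and is antisymmetric), the two candidate values exceeding $\varphi_Q(x)$ are genuinely larger; but $\varphi_Q(x)=h$ is the global maximum, so they cannot occur. Hence every neighbour of $x$ has height strictly smaller than $\varphi_Q(x)$, so $x$ is interior and a strict local maximum, i.e.\ a maximal vertex, and a flip around $x$ is available.

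Next I would carry out this flip. By the discussion preceding the lemma, it yields a tiling $Q'$ whose height function agrees with $\varphi_Q$ everywhere except at $x$, where it is decreased by $k$; in particular the total sum of heights over all vertices strictly decreases. I would use this sum as a monovariant: since $P$ is finite there are only finitely many tilings, hence finitely many possible height sums, so this strictly decreasing integer quantity is bounded below and the process of flipping interior maximum-height vertices must terminate (equivalently, one may invoke the acyclicity of the flip relation noted earlier). When it terminates, no interior vertex attains the current maximum height, which is precisely the definition of a maximal tiling, completing the proof.

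The step I expect to be the real obstacle is the one in the second paragraph: arguing that an interior vertex of globally maximal height cannot have a neighbour of equal height and is therefore genuinely flippable. Everything else — well-definedness of the flip and its effect of lowering a single height by $k$, and the termination argument — is already furnished by the preceding material. I would also remark that this reasoning never uses fullness of $P$, consistently with the hypothesis that $P$ need not be full.
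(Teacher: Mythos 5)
Your proof is correct, and it is organized differently from the paper's. The paper uses an extremal argument: it picks a tiling $Q$ minimizing $\sum_{x\in V}\varphi_Q(x)$ over all tilings of $P$, and argues that if $Q$ were not maximal one could flip around an interior maximal vertex, decreasing the sum by $k$ and contradicting minimality. You instead run a descent: starting from any tiling, repeatedly flip an interior vertex of globally maximal height, and use the strictly decreasing height sum together with finiteness of the set of tilings to force termination. These are two packagings of the same monovariant, so the approaches are close in spirit; but your write-up supplies a step that the paper's proof leaves entirely implicit, and which is exactly what you flagged as the obstacle. The paper's proof silently passes from ``$Q$ is not a maximal tiling'' (some vertex of \emph{global} maximum height lies off the boundary) to ``there is a \emph{maximal vertex} $x_m$ not on the boundary'' (a strict local maximum, which is what the flip construction requires). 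Your observation that along any edge $e=(x,y)$ the potential satisfies $\varphi_Q(y)-\varphi_Q(x)=\pm C_Q(e)\in\{1-k,\,-1,\,1,\,k-1\}$, hence is never zero, is precisely the bridge: adjacent vertices never share a height, so a global maximum is automatically a strict local maximum and is flippable. In this respect your proof is more complete than the paper's own; the only economy the paper's version buys is that the extremal formulation dispenses with an explicit termination argument (existence of a minimizer replaces it), which is also why the same minimizing tiling is reused in the proof of Lemma~\ref{lm:reach}.
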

\begin{proof}
Let $V$ be the set of vertices of $P$, and let
$Q$ be a tiling of $P$ such that for all tiling $Q'$ of $P$, we have:
$$\sum_{x\in V} \varphi_Q(x) \le
 \sum_{x\in V} \varphi_{Q'}(x).$$
We will prove that $Q$ is a maximal tiling.
Suppose there is a maximal vertex $x_m$ which is not on the boundary.
Therefore, one can transform $Q$ into $Q'$ by a flip around $x_m$.
Then $\sum_{x\in V} \varphi_{Q'}(x)\ =\ \sum_{x\in V} \varphi_Q(x)-k$,
which is in contradiction with the hypothesis.
\end{proof}

\begin{lemma} \label{lm:reach}
For all tiling $Q$ of a $k$-regular contractible polycell $P$,
there exists a unique tiling in $\Tmax_P$ reachable
from $Q$ by a sequence of flips.
\end{lemma}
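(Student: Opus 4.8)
The plan is to treat the flips as a terminating and confluent rewriting system on the finite set of tilings of $P$: existence will come from termination, uniqueness from confluence, and the only non-formal ingredient will be the identification of the reachable terminal tilings with the maximal tilings. For existence I would use the monovariant $\Phi(Q)=\sum_{x}\varphi_Q(x)$, the sum ranging over the vertices of $P$. As already noted before Lemma~\ref{lm:maxinboundary}, a flip around a maximal vertex $x$ lowers $\varphi_Q(x)$ by $k$ and leaves every other height unchanged, so each flip decreases $\Phi$ by exactly $k$. Since $P$ is finite there are finitely many tilings and $\Phi$ is bounded below, hence any sequence of flips starting from $Q$ terminates at a tiling $Q^\ast$ admitting no flip, \ie\ with no interior maximal vertex. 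Now for any edge $e=(u,v)$ we have $\varphi_{Q^\ast}(v)-\varphi_{Q^\ast}(u)=C_{Q^\ast}(e)\in\{1,1-k\}$, so no two neighbours ever share a height; in particular every vertex of globally maximal height is a strict local maximum and, $Q^\ast$ having no interior maximal vertex, must lie on $\partial P$. Thus $Q^\ast\in\Tmax_P$ is reachable from $Q$, which proves existence.

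For uniqueness I would establish a one-step diamond property and conclude by confluence. First, two distinct interior maximal vertices $x$ and $y$ are never adjacent, since an edge between them would force simultaneously $\varphi_Q(x)>\varphi_Q(y)$ and $\varphi_Q(y)>\varphi_Q(x)$. Hence flipping around $x$ alters neither the height of $y$ nor the heights of its neighbours, so $y$ remains maximal after the flip, and symmetrically $x$ remains maximal after a flip around $y$. Flipping $x$ then $y$ and flipping $y$ then $x$ both produce the height function obtained from $\varphi_Q$ by subtracting $k$ at $x$ and at $y$ and leaving it unchanged elsewhere; by the bijection of Theorem~\ref{th:corres} they therefore yield the same tiling. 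Combined with termination, this commutation gives confluence, so all maximal sequences of flips issued from $Q$ reach one and the same tiling: the sink reachable from $Q$ is unique.

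It remains to identify $\Tmax_P$ with the sinks, and this is where I expect the real difficulty. One inclusion is the neighbour-distinctness argument already used for $Q^\ast$: a sink has no interior maximal vertex, so its maximal-height vertices all lie on $\partial P$, whence it is a maximal tiling. The converse — that a maximal tiling admits no flip — is the crux, because a priori a tiling could keep its global maximum on $\partial P$ while still carrying an interior maximal vertex of smaller height; such a tiling would be a non-sink element of $\Tmax_P$, and flipping it down to the unique reachable sink would exhibit two distinct maximal tilings reachable from it, contradicting the statement. What must be proved, then, is a maximum principle: the presence of an interior maximal vertex forces the global maximum of $\varphi_Q$ to be attained at an interior vertex. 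I would attempt this from contractibility, exploiting that every closed travel has zero flux to control how the potential can climb from a low interior peak to a strictly higher boundary value. Once this maximum principle is secured, $\Tmax_P$ coincides exactly with the set of sinks, and the unique reachable sink furnished above is the unique reachable maximal tiling.
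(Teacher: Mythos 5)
Your first two paragraphs are, in substance, exactly the paper's proof of this lemma. The paper obtains existence from the same monovariant $\sum_x\varphi_Q(x)$ (invoking the proof of Lemma~\ref{lm:maxinboundary} for the fact that a tiling admitting no flip lies in $\Tmax_P$), and obtains uniqueness from the same observation that flips around two distinct maximal vertices commute; you are only more explicit about why this holds (non-adjacency of maximal vertices) and about how termination plus the diamond property yields confluence. So, for what the paper actually proves, your route is the same, just more carefully written.

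The crux you isolate in your last paragraph, however, is not addressed by the paper at all: its proof tacitly reads ``the tiling in $\Tmax_P$ we obtain'' as ``the tiling obtained by flipping until no maximal vertex remains'', \ie\ it proves existence and uniqueness of the reachable \emph{sink}, and never rules out non-sink elements of $\Tmax_P$. Your worry is justified in the strongest sense: the maximum principle you propose is false, so this gap cannot be closed. Take $P$ to be the Aztec diamond of order $2$ (rows of $2,4,4,2$ cells), which with the orientation convention of the paper's planar application (white cells clockwise, black cells counterclockwise) is a full, tilable, $4$-regular contractible polycell. Let $Q$ consist of the two vertical dominoes covering the leftmost and rightmost pairs of cells, plus four horizontal dominoes, one per row, covering the two central cells of each row (this works for one of the two chessboard colorings; for the other, take the image of $Q$ under a quarter turn). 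Computing heights from the bottom-left boundary vertex, the boundary heights climb along the staircases and attain their maximum $3$ at exactly two boundary vertices, the leftmost and rightmost vertices of the diamond, while the five interior vertices carry in $Q$ the heights $2,-1,0,0,2$. Thus the global maximum of $\varphi_Q$ is attained only on the boundary, so $Q\in\Tmax_P$ under the paper's definition; yet each interior vertex of height $2$ (the center of a $2\times 2$ block of two stacked horizontal dominoes) is a maximal vertex, with neighbors of heights $1,1,-1,-1$, so $Q$ admits a flip, and the flipped tiling again lies in $\Tmax_P$. Hence several distinct elements of $\Tmax_P$ are reachable from $Q$, and the lemma as literally stated is false; Theorem~\ref{th:zz} and its corollary fail with it (here $\overline{A_P}$ is connected while $\left|\Tmax_P\right|>1$). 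Everything is repaired by defining a maximal tiling as one all of whose \emph{maximal vertices} lie on the boundary, \ie\ one admitting no flip: under that definition your third paragraph becomes vacuous ($\Tmax_P$ is the set of sinks by definition), and your first two paragraphs --- like the paper's proof --- establish the lemma completely.
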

\begin{proof}
It is clear that at least one tiling in $\Tmax_P$ can be reached
from $Q$ by a sequence of flips, since the flip operation decreases
the sum of the heights, and since we know from the proof
of Lemma~\ref{lm:maxinboundary}
that a tiling such that this sum is minimal is always
in $\Tmax_P$.
We now have to prove that the tiling in $\Tmax_P$ we obtain does not
depend on the order in which we flip around the successive maximal
vertices.
Since making a flip around a maximal point $x$ is nothing but decreasing
its height by $k$ and keeping the other values,
if we have two maximal vertices $x$ and $x'$ then it is equivalent
to make first the flip around $x$ and after the flip around $x'$ or the
converse.
\end{proof}

\begin{lemma} \label{lm:phi->tiling}
Let $P$ be a $k$-regular contractible and tilable polycell. 
A tiling $Q$ in $\Tmax_P$ is totally determined by the values
of $\varphi_Q$ on $\partial P$.
\end{lemma}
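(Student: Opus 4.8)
The plan is to reduce everything to the height function, since $\varphi_Q$ already determines the tiling. Indeed, taking the travel $T_{x,y}$ to be a single directed edge $e=(x,y)$ gives $C_Q(e)=\varphi_Q(y)-\varphi_Q(x)$, so the values of $\varphi_Q$ on \emph{all} vertices determine the tension $C_Q$, hence determine $Q$ by Theorem~\ref{th:corres}. It therefore suffices to show that the restriction of $\varphi_Q$ to the boundary vertices determines $\varphi_Q$ on the whole of $P$. I would establish this by a discrete maximum principle: if $Q,Q'\in\Tmax_P$ agree on $\partial P$, then $\varphi_Q=\varphi_{Q'}$ everywhere, whence $C_Q=C_{Q'}$ and $Q=Q'$.

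Two preliminary facts drive the argument. First, since $C_Q(e)\in\{1-k,1\}$ and neither value is $0$, no two adjacent vertices can have the same height; in particular a globally maximal vertex, all of whose neighbours are then strictly lower, is a strict local maximum. Second, a tiling in $\Tmax_P$ has no interior strict local maximum: if $x$ were such a vertex one could flip around $x$ and, iterating, reach a terminal tiling $R$ of strictly smaller height-sum having no interior local maximum, so (by the first fact its global maxima lie on $\partial P$) $R\in\Tmax_P$ and $R\neq Q$; but $R$ is reachable from $Q$ by flips and $Q$ itself is a reachable element of $\Tmax_P$, contradicting the uniqueness in Lemma~\ref{lm:reach}.

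For the maximum principle, set $\delta=\varphi_Q-\varphi_{Q'}$, which vanishes on $\partial P$. Assuming $\delta\not\equiv 0$, I would take $M=\max_x\delta(x)>0$ (the case $\min_x\delta(x)<0$ follows by swapping the roles of $Q$ and $Q'$) and consider $S=\{x:\delta(x)=M\}$, a nonempty set of interior vertices. The key computation is that for $x\in S$ and a neighbour $y\notin S$ the strict inequality $\delta(y)<\delta(x)$, combined with $C_Q(\cdot),C_{Q'}(\cdot)\in\{1-k,1\}$, forces the incident edge to be a tiling edge of $Q$ but not of $Q'$; in both possible orientations this yields $\varphi_Q(y)<\varphi_Q(x)$. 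Choosing $x^\ast\in S$ that maximizes $\varphi_Q$ then makes every neighbour of $x^\ast$ strictly $\varphi_Q$-lower (those in $S$ by the no-equal-heights fact together with maximality, those outside $S$ by the computation), so $x^\ast$ is an interior strict local maximum of $\varphi_Q$, contradicting the second preliminary fact. Hence $\delta\equiv 0$.

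I expect the main obstacle to be stating the key computation cleanly, since the sign of the potential difference along an edge depends on whether the edge points into or out of $x$, so the orientation case analysis must be handled carefully; the no-equal-heights observation is exactly what promotes the chosen extremal vertex $x^\ast$ to a genuine strict local maximum rather than letting it sit on a plateau, and it is also what links the global-maximum definition of $\Tmax_P$ to the local flip structure used in the second fact.
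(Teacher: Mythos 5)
Your proof is correct, but it takes a genuinely different route from the paper's. The paper argues constructively, by induction on the number of cells: since $Q\in\Tmax_P$, the global maximum of $\varphi_Q$ lies on $\partial P$; at such a vertex every outgoing edge is forced to carry tension $1-k$, hence to be a tiling edge, so the corresponding tiles are determined, peeled off, and one recurses on the smaller polycell. That peeling argument is precisely what the paper later turns into Algorithm~\ref{algo_tiling} for building the maximal tiling and deciding tilability in polynomial time --- a byproduct your argument does not provide. In exchange, your non-constructive maximum-principle proof is logically tighter on points the paper leaves implicit: the reduction heights $\rightarrow$ tension $\rightarrow$ tiling via Theorem~\ref{th:corres}, the observation that adjacent heights never coincide, and above all the fact that membership in $\Tmax_P$ (a condition on \emph{global} maxima only) rules out \emph{interior local} maxima, which you correctly derive from the uniqueness in Lemma~\ref{lm:reach} (legitimate to invoke, since it precedes this lemma and creates no circularity). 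Your extremal argument on $\delta=\varphi_Q-\varphi_{Q'}$ is sound: across any edge $\delta$ changes by $0$ or $\pm k$, so a strict drop forces the two tensions to take opposite extreme values on that edge, and then $\varphi_Q(y)<\varphi_Q(x)$ in either orientation; choosing $x^\ast$ to maximize $\varphi_Q$ on the plateau $S$ then produces an interior maximal vertex, the desired contradiction. One small slip, which you yourself anticipated: the intermediate claim that the edge is ``a tiling edge of $Q$ but not of $Q'$'' is true only when the edge is directed out of $x$; when it is directed into $x$ the roles of $Q$ and $Q'$ are exchanged, although the conclusion $\varphi_Q(y)<\varphi_Q(x)$ holds in both cases, which is all your argument needs.
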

\begin{proof}
The proof is by induction over the number of cells in $P$.
Let $x$ be a maximal vertex for $\varphi_Q$ in $\partial P$.
For all outgoing edges $e$ of $x$,
$C_Q(e)=1-k$ (otherwise $\varphi(x)$ would not be maximal). 
Therefore, these edges
can be considered as
tiling edges, and determine some tiles of a tiling $Q$ of $P$.
Iterating this process, one finally obtains $Q$.
See Figure~\ref{fig_ex_algo} for an example.
\end{proof}

\begin{theorem} \label{th:zz}
Let $P$ be a $k$-regular contractible and tilable polycell. 
The number of connected components in $\overline{A_P}$ is equal to the cardinal
of $\Tmax_P$.
\end{theorem}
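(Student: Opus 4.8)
The plan is to use Lemma~\ref{lm:reach} to define a map $\mu$ from the set of all tilings of $P$ to $\Tmax_P$, sending each tiling $Q$ to the unique maximal tiling reachable from $Q$ by a sequence of flips. I would then show that the connected components of $\overline{A_P}$ are exactly the fibers $\mu^{-1}(Q_{\max})$ for $Q_{\max} \in \Tmax_P$, and that $\mu$ is onto; since distinct maximal tilings give distinct nonempty fibers, the number of components then equals the cardinal of $\Tmax_P$.

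First I would check that $\mu$ is well defined and surjective. Well-definedness is precisely the existence-and-uniqueness statement of Lemma~\ref{lm:reach}. For surjectivity, observe that every $Q_{\max} \in \Tmax_P$ is reachable from itself (by the empty sequence of flips) and is maximal, so by the uniqueness part of Lemma~\ref{lm:reach} it must be its own image: $\mu(Q_{\max}) = Q_{\max}$. In particular $\mu$ restricted to $\Tmax_P$ is the identity, so each maximal tiling indexes a nonempty fiber.

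Next, the core of the argument is to prove both that $\mu$ is constant on each connected component of $\overline{A_P}$ and, conversely, that each fiber of $\mu$ lies in a single component. For the first claim it suffices to treat a single undirected edge $\{Q,Q'\}$, which arises from a directed flip edge, say $(Q,Q')$. Then $Q'$ is reachable from $Q$, hence $\mu(Q')$ --- being reachable from $Q'$ --- is a maximal tiling reachable from $Q$; by the uniqueness in Lemma~\ref{lm:reach} we get $\mu(Q')=\mu(Q)$ (and symmetrically if the edge comes from $(Q',Q)$). Since connectivity is generated by edges, $\mu$ is constant along every path, hence on every component. For the converse, if $\mu(Q_1)=\mu(Q_2)=Q_{\max}$, then each of $Q_1$ and $Q_2$ is joined to $Q_{\max}$ by a directed flip path; forgetting the orientations yields undirected paths in $\overline{A_P}$, so $Q_1$ and $Q_2$ lie in the same component.

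Combining the two inclusions, the partition of the tilings into connected components of $\overline{A_P}$ coincides with the partition into fibers of $\mu$. As $\mu$ is surjective onto $\Tmax_P$ with nonempty fibers, the number of components equals $|\Tmax_P|$, which is the claim. The only delicate point is the constancy of $\mu$ across a single edge: it rests entirely on the \emph{uniqueness} assertion of Lemma~\ref{lm:reach}, without which one flip could in principle lead toward a different maximal tiling. I would therefore be careful to invoke that uniqueness explicitly at this step, rather than merely the existence of a reachable maximal tiling.
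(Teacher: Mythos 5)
Your proof is correct and takes essentially the same approach as the paper: the paper's own proof of Theorem~\ref{th:zz} is simply ``Immediate from Lemma~\ref{lm:reach},'' and your argument is exactly the careful unpacking of that immediacy, using the existence-and-uniqueness statement of the lemma to define the map onto $\Tmax_P$ and to identify its fibers with the connected components of $\overline{A_P}$. The explicit attention you pay to the uniqueness part when crossing a single flip edge is precisely the point the paper leaves implicit.
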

\begin{proof}
Immediate from Lemma~\ref{lm:reach}.
\end{proof}

This theorem is very general and can explain many results which appeared
in previous papers. We obtain for example the following corollary,
which generalizes the
one saying that any domino tiling of a
polyomino can be transformed into any other one by a sequence of
flips \cite{BNRR95}.

\begin{corollary}
Let $P$ be a full $k$-regular contractible and tilable polycell.
There is a unique element in $\Tmax_P$, which implies that
$\overline{A_P}$ is connected.
\end{corollary}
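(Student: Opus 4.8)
The plan is to establish the uniqueness of the maximal tiling and then deduce connectivity directly from Theorem~\ref{th:zz}, which equates the number of connected components of $\overline{A_P}$ with the cardinal of $\Tmax_P$. Nonemptiness of $\Tmax_P$ is already guaranteed by Lemma~\ref{lm:maxinboundary} (since $P$ is tilable), so the entire task reduces to proving that $\Tmax_P$ contains at most one element. By Lemma~\ref{lm:phi->tiling}, a tiling in $\Tmax_P$ is completely determined by the values of its height function on $\partial P$; hence it suffices to show that these boundary values are the same for every tiling of $P$, independently of the tiling chosen.

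First I would observe that along the boundary the height function is forced. Recall that for every tiling $Q$ and every edge $e \in \partial P$ we have $C_Q(e)=1$. Consequently, if $e$ joins two boundary vertices $x$ and $y$, the flux of $C_Q$ over the single-edge travel from $x$ to $y$ equals $+1$ or $-1$ according only to the orientation of $e$ in $G$, and is therefore independent of $Q$. Thus the height increment $\varphi_Q(y)-\varphi_Q(x)$ across any boundary edge is a quantity determined by $G$ alone.

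Next I would use the fullness hypothesis. Since $P$ is full, $\overline{\partial P}$ is connected, so from the distinguished vertex $\nu$ (for which $\varphi_Q(\nu)=0$ in every tiling) one can reach any boundary vertex $w$ through a travel $T$ consisting entirely of boundary edges. Because $C_Q$ is a tension --- recall that the flow associated with any tiling of a contractible polycell is a tension --- the flux $F_T(C_Q)$ does not depend on the chosen travel, and I may evaluate $\varphi_Q(w)=F_T(C_Q)$ along this boundary travel. By the additivity of the flux over concatenation and by the previous paragraph, $F_T(C_Q)$ is a sum of increments each equal to $\pm 1$ and each independent of $Q$; hence $\varphi_Q(w)$ is the same for all tilings $Q$.

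Combining these observations, every tiling of $P$ carries identical height values on $\partial P$. Applying Lemma~\ref{lm:phi->tiling} to the nonempty set of maximal tilings then shows that all of them coincide, so $|\Tmax_P|=1$, and Theorem~\ref{th:zz} immediately yields that $\overline{A_P}$ has exactly one connected component, \ie\ is connected. The only delicate point, and the step I would be most careful about, is justifying that the boundary travel may be used to compute the potential: this is exactly where both the tension property (path-independence of the flux) and the fullness hypothesis (existence of a boundary path from $\nu$ to every boundary vertex) are indispensable.
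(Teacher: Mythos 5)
Your proposal is correct and follows essentially the same route as the paper's own proof: fix the boundary heights independently of the tiling (using $C_Q(e)=1$ on $\partial P$ and the connectedness of $\overline{\partial P}$), then invoke Lemma~\ref{lm:phi->tiling} for uniqueness and Theorem~\ref{th:zz} for connectivity. The only difference is that you spell out the details the paper leaves implicit, namely the role of the tension property (path-independence of the flux) in evaluating the potential along a boundary travel.
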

\begin{proof}
Since $\overline{\partial P}$ is connected, the heights of the points in
$\partial P$ are 
totally determined by the orientation of the edges of $\partial P$
and do not depend on any tiling $Q$. 
Therefore, from Lemma~\ref{lm:phi->tiling},
there is a unique tiling in $\Tmax_P$. 
\end{proof}

As a consequence, if $P$ is a full tilable and contractible polycell,
the height of a vertex $x$ on the boundary of $P$ is independent
of the considered tiling.
In the case of full polyominoes,
this restriction of $\varphi_Q$ to the boundary of $P$ is called
{\em height on the boundary}
\cite{Fou97} and has been introduced in \cite{Thu90}.
Notice that this height on the boundary can be defined
in the more general case where $P$ has a balanced boundary.

Notice also that the proof of
Lemma~\ref{lm:phi->tiling} gives an algorithm
to build the unique maximal tiling of any $k$-regular contractible
and tilable full polycell $P$,
since the height function on the boundary of $P$
can be computed without knowing any tiling of $P$. See
Algorithm~\ref{algo_tiling} and
Figure~\ref{fig_ex_algo}.
This algorithm gives in polynomial time a tiling of $P$ if it is tilable.
It can also be used to decide whether $P$ is tilable or not.
Therefore, it generalizes the result of Thurston \cite{Thu90} saying
that it can be decided in polynomial time if a given polyomino is
tilable with dominoes.

\begin{algorithm}
\SetVline
\In{A full $k$-regular contractible polycell $P$,
    its boundary $\partial P$ and a distinguished
    vertex $\nu$ on this boundary.}
\Out{An array \emph{tension} on integers indexed by the
     edges of $P$ and another one \emph{height}
     indexed by the vertices of $P$. The first gives the tension
     associated to the maximal tiling, and the second gives its
     height function.}
\begin{flushleft}
\Begin{
 $P' \leftarrow P$\;
 $\mbox{height}[\nu] \leftarrow 0$\;
 \ForEach{edge $e=(v,v')$ in $\partial P'$}{
  $\mbox{tension}[e] \leftarrow 1$\;
  }
 \ForEach{vertex $v$ on the boundary of $P'$}{
  Compute $\mbox{height}[v]$ using the values in $\mbox{tension}$\;
  }
 \Repeat{$P'$ is empty}{
  \ForEach{vertex $v$ on the boundary of $P'$ which has the minimal height among the heights of all the vertices on the boundary}{
   \ForEach{incoming edge $e$ of $v$}{
    $\mbox{tension}[e] \leftarrow 1-k$\;
    \ForEach{edge $e'$ in a cell containing $e$}{
     $\mbox{tension}[e'] \leftarrow 1$\;
     }
    }
   \ForEach{edge $e=(v,v')$ such that $\mbox{tension}[e]$ has newly
            be computed}{
    Compute $\mbox{height}[v]$ and $\mbox{height}[v']$ using the
    values in $\mbox{tension}$\;
    }
   }
  Remove in $P'$ the cells which contain a negative edge\;
  Compute the boundary of $P'$: it is composed of all the
  vertices of $P'$ which have a computed height\;
  }
 }
\end{flushleft}
\caption{\label{algo_tiling}Computation of the maximal tiling of a
full $k$-regular contractible polycell.}
\end{algorithm}

\fig[0.7]{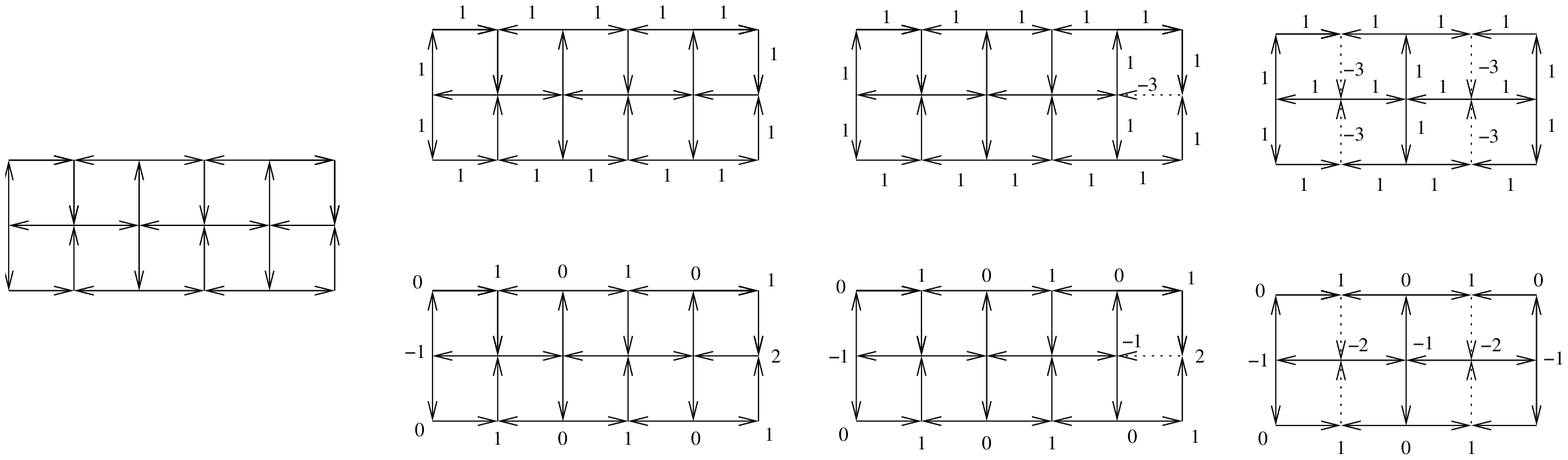}{An example of execution of Algorithm~\ref{algo_tiling}.
From left to right, we give the polycell, the result of the computation
of the height on the boundary, and then the results of each iteration
of the addition of tiles and removing of tiles process. In this
example, the first iteration of the algorithm gives one vertical
tile, and the second (and last) iteration gives four horizontal tiles.}

With these results, we obtained much information concerning a central
question of tilings: the connectivity of the undirected
flip-accessibility graph.
We did not only give a condition under which this graph is connected,
but we also gave a relation between the number of its connected components
and some special tilings. We will now deepen the study of the structure
induced by the flip relation by studying the directed flip-accessibility
graph, and in particular the partial order it induces over the tilings:
$t \le t'$ if and only if $t'$ can be obtained from $t$ by a sequence
of (directed) flips.

\begin{lemma} \label{lm:max}
Let $Q$ and $Q'$ be two tilings in the same connected component
of $A_P$ for a given $k$-regular contractible polycell $P$. 
Let us consider $x_m$ such that
$\left| \varphi_Q(x_m) - \varphi_{Q'}(x_m) \right|$ is maximal in
$\{ \left| \varphi_Q(x) - \varphi_{Q'}(x) \right|,\ x \mbox{ is a
vertex of $P$}\}$.
Then, one can make a flip around $x_m$ from $Q$ or $Q'$.
\end{lemma}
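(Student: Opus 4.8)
The plan is to control the two tilings through the difference $\delta = \varphi_Q - \varphi_{Q'}$ of their height functions, and to produce a flip at a vertex where $|\delta|$ is as large as possible. Two facts about $\delta$ drive the whole argument. First, since $Q$ and $Q'$ lie in the same component of $\overline{A_P}$ they are joined by a chain of flips, and every flip alters the height of a single \emph{interior} vertex by $\pm k$ while leaving all boundary heights fixed; hence $\delta$ vanishes identically on $\partial P$. Second, since $\varphi_Q$ and $\varphi_{Q'}$ are the potentials (both pinned to $0$ at $\nu$) of the tensions $C_Q$ and $C_{Q'}$, and the flux is linear in the flow, $\delta$ is the potential of the tension $C_Q-C_{Q'}$; consequently, for every edge $e=(u,v)$ of $G$ one has $\delta(v)-\delta(u)=C_Q(e)-C_{Q'}(e)$, which lies in $\{-k,0,k\}$ because, by Theorem~\ref{th:corres}, each of $C_Q(e)$ and $C_{Q'}(e)$ belongs to $\{1-k,1\}$. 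Thus $\delta$ can change only by $0$ or $\pm k$ across any edge.

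Next I would set $M=\max_x|\delta(x)|$ and, exchanging $Q$ and $Q'$ if necessary (this only flips the sign of $\delta$), assume $M$ is attained with value $+M$. If $M=0$ then $\varphi_Q=\varphi_{Q'}$, so $C_Q=C_{Q'}$ and $Q=Q'$ by Theorem~\ref{th:corres}, and nothing is to be proved; so assume $M>0$. Because $\delta\equiv 0$ on $\partial P$, the whole level set $S=\{\,x:\delta(x)=M\,\}$ lies in the interior of $P$. I would then take $x_m$ to be a vertex of $S$ on which $\varphi_Q$ is largest (equivalently on which $\varphi_{Q'}$ is largest, since the two agree up to the constant $M$ on $S$).

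The crux is to upgrade ``extremal in $S$'' to ``extremal in $\overline{G}$''. Suppose some neighbour $y$ of $x_m$ had $\varphi_Q(y)>\varphi_Q(x_m)$. A short case check on the orientation of the edge joining them, using $C_Q(e),C_{Q'}(e)\in\{1-k,1\}$, shows that this inequality forces $\delta(y)-\delta(x_m)\in\{0,k\}$; since $\delta(y)\le M=\delta(x_m)$ the value $k$ is impossible, so $\delta(y)=M$, i.e. $y\in S$. But then $y$ is a vertex of $S$ with $\varphi_Q(y)>\varphi_Q(x_m)$, contradicting the choice of $x_m$. Hence every neighbour of $x_m$ is lower, so $x_m$ is an interior maximal vertex of $Q$ and a flip around $x_m$ is available from $Q$. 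The alternative in the statement is the mirror image: taking instead the vertex of $S$ on which $\varphi_{Q'}$ is smallest yields, by the dual check, an interior local minimum of $\varphi_{Q'}$, around which a flip from $Q'$ is available.

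The step that needs the most care is exactly this last implication, and it is where both structural facts are indispensable: the vanishing of $\delta$ on $\partial P$ keeps $S$ in the interior, and the quantization $\delta(v)-\delta(u)\in\{-k,0,k\}$ is what drags any strictly higher neighbour of $x_m$ back into $S$, so that the extremal choice can be invoked. I would also flag the (benign) subtlety that an \emph{arbitrary} vertex realizing $|\delta|=M$ need not itself be flippable, since it may sit between a higher and a lower neighbour inside $S$; it is the extremal choice within the level set that guarantees a flip, and the same phenomenon read from the $Q$ side or from the $Q'$ side is precisely what yields the ``from $Q$ or $Q'$'' dichotomy.
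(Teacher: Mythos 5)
Your proof is correct, but it is not the paper's proof: it is, in effect, a repaired version of it. The paper's argument takes an \emph{arbitrary} vertex $x_m$ maximizing $\left|\varphi_Q-\varphi_{Q'}\right|$, assumes WLOG $\varphi_{Q'}(x_m)<\varphi_Q(x_m)$, and tries to exclude, via a case analysis on the circuits through $x_m$, the possibility that some neighbour of $x_m$ is higher than $x_m$ in $Q$. That case analysis contains a non sequitur: from $\varphi_Q(x_s)=\varphi_Q(x_m)+1$ and maximality of $\delta=\varphi_Q-\varphi_{Q'}$ at $x_m$ one can only conclude $C_{Q'}=C_Q=1$ on the edge $(x_m,x_s)$, i.e.\ that $x_s$ lies in the same level set of $\delta$ as $x_m$; no contradiction follows. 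Indeed, the claim for an arbitrary maximizer is false. On the $4\times 4$ square ($k=4$), take two domino tilings that both contain the horizontal domino covering the two central cells of the second row, with $Q'$ tiled ``high'' around it and $Q$ tiled ``low'' around it; then $\delta=4=M$ on all six vertices of the shared domino, but one corner vertex of that domino has, inside this level set, one strictly higher and one strictly lower neighbour \emph{in both tilings}, so it is a local extremum of neither and no flip of any kind can be made around it from $Q$ or from $Q'$. Your extremal choice --- the vertex of the level set $S$ on which $\varphi_Q$ is largest --- is exactly what rules this configuration out, so the ``benign subtlety'' you flag at the end is in fact the precise gap in the paper's own proof, and your restriction to a well-chosen maximizer is the statement that is both true and sufficient for everything downstream.

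Two further remarks. First, your reading of the conclusion, in which the ``or $Q'$'' alternative is realized by an \emph{up}-flip at a local minimum of $\varphi_{Q'}$ (an edge of $\overline{A_P}$ traversed backwards), differs from the paper's reading, where ``or $Q'$'' merely covers the symmetric case $\varphi_{Q'}(x_m)>\varphi_Q(x_m)$. Yours is the more useful version: the proof of Lemma~\ref{lem:} needs both the down-flip on the higher tiling (which preserves $\min(\varphi_Q,\varphi_{Q'})$) and the up-flip on the lower tiling (which preserves $\max(\varphi_Q,\varphi_{Q'})$), and your argument supplies both. Second, the degenerate case $M=0$, which you dismiss, is vacuous only under the convention that the lemma presupposes $Q\neq Q'$; the paper makes the same silent assumption, and the iteration in Lemma~\ref{lem:} stops exactly there, so nothing is lost.
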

\begin{proof}
We can suppose that $\varphi_{Q'}(x_m) < \varphi_Q(x_m)$ (otherwise
we exchange $Q$ and $Q'$). 
We will show that the height function $\varphi$ defined by $\varphi(x_m) =
\varphi_Q(x_m) - k$ and $\varphi(x) = \varphi_Q(x)$ for all
vertex $x \not= x_m$ defines a tiling of $P$ (which is therefore
obtained from $Q$ by a flip around $x_m$).
Let us consider any circuit which contains $x_m$. Therefore,
it contains an incoming edge $(x_p,x_m)$ and an outgoing edge
$(x_m,x_s)$ of $x_m$.
We will prove that $\varphi_Q(x_p)=\varphi_Q(x_m)-1$ and
$\varphi_Q(x_s)=\varphi_Q(x_m)-k+1$, which will prove the claim
since it proves that $x_m$ is a maximal vertex.

The couple
$(\varphi_Q(x_p),\varphi_Q(x_s))$ can have three values:
$(\varphi_Q(x_m)-1,\varphi_Q(x_m)+1)$,
$(\varphi_Q(x_m)-1,\varphi_Q(x_m)-k+1)$,
or
$(\varphi_Q(x_m)+k-1,\varphi_Q(x_m)+1)$.
But, if $\varphi_Q(x_s)=\varphi_Q(x_m)+1$ then
$\varphi_{Q'}(x_s)=\varphi(x_m)+1$,
and so $\varphi_{Q'}(x_m)=\varphi(x_m)+k$, which is a contradiction.
If $\varphi_Q(x_p)=\varphi_Q(x_m)+k-1$ then
$\varphi_{Q'}(x_p)=\varphi_Q(x_m)+k-1$,
and so $\varphi_{Q'}(x_m)>\varphi_Q(x_m)$, which is a contradiction again.
Therefore, $(\varphi_Q(x_p),\varphi_Q(x_s))$ must be equal to
$(\varphi_Q(x_m)-1,\varphi_Q(x_m)-k+1)$ for all circuit which contain $x_m$,
which is what we needed to prove.
\end{proof}

Let us now consider two tilings $Q$ and $Q'$ of a $k$-regular
contractible polycell $P$.
Let us define $\max(\varphi_Q,\varphi_{Q'})$ as the height function
such that its value at each point is the maximal between
the values of $\varphi_Q$
and $\varphi_{Q'}$ at this point. Let us define
$\min(\varphi_Q,\varphi_{Q'})$ dually. Then, we have the following result:

\begin{lemma} \label{lem:}
Given two tilings $Q$ and $Q'$ of a $k$-regular
contractible polycell $P$,
$\max(\varphi_Q,\varphi_{Q'})$ and $\min(\varphi_Q,\varphi_{Q'})$
are the height functions of tilings
of $P$.
\end{lemma}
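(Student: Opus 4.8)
The plan is to reduce everything to the bijection of Theorem~\ref{th:corres}: a function $\psi$ on the vertices of $P$ with $\psi(\nu)=0$ is the height function of a tiling precisely when the flow $C$ defined by $C(e)=\psi(y)-\psi(x)$ for every directed edge $e=(x,y)$ is a tension satisfying $C(e)=1$ on $\partial P$ and $C(e)\in\{1-k,1\}$ on every edge. Since $C$ is, by construction, the discrete gradient of a potential, the contributions telescope around any closed travel $T$, so $F_T(C)=0$ automatically and $C$ is a tension for free. Hence I would fix $\psi=\max(\varphi_Q,\varphi_{Q'})$, observe that $\psi(\nu)=\max(0,0)=0$, and reduce the claim to checking, for every edge $e=(x,y)$, that $\psi(y)-\psi(x)$ equals $1$ when $e\in\partial P$ and lies in $\{1-k,1\}$ in general.

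The boundary condition is the easy half. For $e=(x,y)\in\partial P$ both tensions satisfy $C_Q(e)=C_{Q'}(e)=1$, so $\varphi_Q(y)=\varphi_Q(x)+1$ and $\varphi_{Q'}(y)=\varphi_{Q'}(x)+1$; adding the common constant $1$ commutes with the pointwise maximum, giving $\psi(y)=\psi(x)+1$. The identical computation handles $\min$.

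The crux is the general edge condition, and the key auxiliary fact I would establish first is that $\varphi_Q-\varphi_{Q'}$ is everywhere a multiple of $k$. Along every directed edge both potentials increase by a value in $\{1-k,1\}$, and $1-k\equiv 1\pmod{k}$; so, choosing one travel $T_{\nu,v}$ from $\nu$ to an arbitrary vertex $v$, both $\varphi_Q(v)=F_{T_{\nu,v}}(C_Q)$ and $\varphi_{Q'}(v)=F_{T_{\nu,v}}(C_{Q'})$ are congruent modulo $k$ to the same signed edge-count of $T_{\nu,v}$. Therefore $\varphi_Q(v)\equiv\varphi_{Q'}(v)\pmod{k}$ for all $v$, i.e. $\varphi_Q-\varphi_{Q'}\in k\mathbb{Z}$.

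With this in hand I would verify the edge condition by a short case analysis on the pair of increments $(\varphi_Q(y)-\varphi_Q(x),\varphi_{Q'}(y)-\varphi_{Q'}(x))\in\{1-k,1\}^2$. When the two increments coincide, $\psi$ inherits that same increment directly. The delicate case, and the main obstacle, is the mixed one where one potential increases by $1$ and the other by $1-k$: here a purely local estimate only pins $\psi(y)-\psi(x)$ somewhere in the range $[1-k,1]$, which is not yet enough. This is exactly where the auxiliary fact is needed: $\varphi_Q(x)-\varphi_{Q'}(x)$ is a multiple of $k$, hence either $0$ or of absolute value at least $k$, and since this difference jumps by exactly $k$ across the edge, the same one of $\varphi_Q,\varphi_{Q'}$ must dominate at both $x$ and $y$ (or tie), which pins $\psi(y)-\psi(x)$ to exactly $1$ or $1-k$. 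The argument for $\min$ is completely dual, and once both edge conditions hold, Theorem~\ref{th:corres} produces the two tilings whose height functions are $\max(\varphi_Q,\varphi_{Q'})$ and $\min(\varphi_Q,\varphi_{Q'})$.
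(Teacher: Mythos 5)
Your proof is correct, but it follows a genuinely different route from the paper's. The paper argues dynamically: it iterates Lemma~\ref{lm:max}, flipping at a vertex where $\left|\varphi_Q(x)-\varphi_{Q'}(x)\right|$ is maximal; each such flip preserves the pointwise minimum (resp.\ maximum) of the two height functions while decreasing $\sum_x\left|\varphi_Q(x)-\varphi_{Q'}(x)\right|$ by $k$, so that when the two tilings finally coincide their common height function is $\min(\varphi_Q,\varphi_{Q'})$ (resp.\ $\max(\varphi_Q,\varphi_{Q'})$). You argue statically: you reduce to the tension characterization of Theorem~\ref{th:corres} (gradients of potentials are automatically tensions, so only the edge conditions matter) and verify those conditions edge by edge, the crux being the explicit observation that $\varphi_Q-\varphi_{Q'}$ is everywhere a multiple of $k$, which forces the same height function to dominate at both endpoints of any edge where the two increments disagree. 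Your route is more self-contained and arguably more rigorous than the paper's two-line sketch: it avoids Lemma~\ref{lm:max} entirely (together with the ``same connected component'' hypothesis that lemma carries but which is absent from the present statement), and it isolates the mod-$k$ congruence that the paper's arguments use only implicitly. What the paper's route buys is information needed later: by realizing $\min$ and $\max$ as endpoints of explicit sequences of directed flips starting from $Q$ and from $Q'$, it shows that these candidate bounds are actually comparable to $Q$ and $Q'$ in the order induced by $A_P$, which is what Theorem~\ref{th:lattice} requires; with your static proof, that comparability must still be supplied separately, and iterating Lemma~\ref{lm:max} is exactly what supplies it.
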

\begin{proof}
We can see that $\max(\varphi_Q,\varphi_{Q'})$ is the height function of
a tiling of $P$
by iterating
Lemma~\ref{lm:max}:
$\sum\limits_x \left| \varphi_Q(x)-\varphi_{Q'}(x) \right|$
can be decreased until $Q = Q'$.
The proof for $\min(\varphi_Q,\varphi_{Q'})$ is symmetric.
\end{proof}

\begin{theorem} \label{th:lattice}
If $P$ is a $k$-regular contractible polycell, then $A_P$
induces a distributive lattice structure over the tilings of $P$.
\end{theorem}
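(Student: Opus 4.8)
The plan is to realize the tilings of $P$ as a sublattice of the distributive lattice $(\mathbb{Z}^V,\le)$ of integer-valued functions on the vertex set $V$ of $P$, ordered pointwise, via the map $Q \mapsto \varphi_Q$. Since any product of chains is a distributive lattice and any sublattice of a distributive lattice is again distributive, it suffices to check three things: that $Q \mapsto \varphi_Q$ is injective, that it is an order embedding (turning the flip order into the reverse of the pointwise order on heights), and that its image is closed under pointwise $\max$ and $\min$. The last point is exactly Lemma~\ref{lem:}, which already identifies $\max(\varphi_Q,\varphi_{Q'})$ and $\min(\varphi_Q,\varphi_{Q'})$ as height functions of tilings; so the bulk of the work lies in the first two points.

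First I would record injectivity. Since the base vertex $\nu$ is fixed, the potential $\varphi_Q$ determines the tension $C_Q$ through $C_Q(v,v') = \varphi_Q(v') - \varphi_Q(v)$ on each edge, and by Theorem~\ref{th:corres} the tension determines the tiling. Hence $Q \mapsto \varphi_Q$ is injective, and two distinct tilings always have distinct height functions.

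Next I would establish the order correspondence within a connected component of $A_P$ (which is where the flip order lives, and where two tilings have heights differing everywhere by a multiple of $k$, since every flip shifts a single height by exactly $k$). One direction is immediate: a directed flip from $Q$ to $Q'$ decreases a single height by $k$ and leaves the others unchanged, so $Q \le Q'$ in the flip order implies $\varphi_{Q'} \le \varphi_Q$ pointwise. For the converse, suppose $\varphi_{Q'} \le \varphi_Q$ with $Q \neq Q'$, and pick a vertex $x_m$ maximizing $\varphi_Q(x) - \varphi_{Q'}(x) > 0$; by Lemma~\ref{lm:max} one may flip $Q$ downward around $x_m$, and because the gap at $x_m$ is a positive multiple of $k$ the inequality $\varphi_{Q'} \le \varphi_Q$ is preserved while $\sum_x (\varphi_Q(x)-\varphi_{Q'}(x))$ strictly drops. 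Iterating reaches $Q'$, so $Q \le Q'$; thus $Q \mapsto \varphi_Q$ is an order anti-isomorphism onto its image.

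Combining these, each connected component of $A_P$ embeds order-dually onto a subset of $(\mathbb{Z}^V,\le)$ that is closed under pointwise $\max$ and $\min$ by Lemma~\ref{lem:}, hence onto a sublattice; the supremum and infimum computed there coincide with those of $\mathbb{Z}^V$, so they inherit distributivity, and order-reversal preserves distributivity. I expect the main obstacle to be the converse reachability step, namely verifying that pointwise domination of heights can really be realized by a sequence of flips, which is precisely where Lemma~\ref{lm:max} and the fact that height gaps are multiples of $k$ are indispensable.
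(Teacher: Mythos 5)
Your proposal is correct and takes essentially the same route as the paper's own proof: the paper likewise defines the lattice operations as $\varphi_Q\wedge\varphi_{Q'}=\min(\varphi_Q,\varphi_{Q'})$ and $\varphi_Q\vee\varphi_{Q'}=\max(\varphi_Q,\varphi_{Q'})$ (justified by Lemma~\ref{lem:}) and obtains distributivity from the distributivity of pointwise $\min$ and $\max$. The only difference is one of rigor: you explicitly verify, via injectivity of $Q\mapsto\varphi_Q$ and an iteration of Lemma~\ref{lm:max}, that the flip order is the reverse of the pointwise order on height functions, a step the paper compresses into ``it is clear from the previous results that this defines the infimum and supremum.''
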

\begin{proof}
Given two tilings $Q$ and $Q'$ in $A_P$,
let us define the following binary operations:
$\varphi_Q\wedge\varphi_{Q'}=\min(\varphi_Q,\varphi_{Q'})$ and 
$\varphi_Q\vee\varphi_{Q'}=\max(\varphi_Q,\varphi_{Q'})$.
It is clear from the previous results that this defines
the infimum and supremum of $Q$ and $Q'$.
To show that the obtained lattice is \emph{distributive}, it suffices
now to verify that these relations are distributive together.
\end{proof}

As already discussed, this last theorem gives much information on the
structure of the flip-accessibility graphs of tilings of polycells.
It also gives the possibility to use in the context of tilings
the numerous results known about distributive
lattices, in particular the generic random sampling algorithm
described in \cite{Pro98}.

To finish this section, we give another proof of Theorem~\ref{th:lattice}
using only discrete dynamical models notions. This proof is very simple
and has the advantage of putting two combinatorial object in a relation which
may help understanding them. However, the reader not interested in
discrete dynamical models may skip the end of this section.

An Edge Firing Game (EFG) is defined by a connected undirected graph $G$ with
a distinguished vertex $\nu$, and an orientation
$O$ of $G$. In other words, $\overline{O}=G$. We then consider the set
of obtainable orientations when we iterate the following rule:
if a vertex $v\not= \nu$ only has incoming edges (it is a \emph{sink}) then
one can reverse all these edges. This set of orientations is ordered
by the reflexive and transitive closure of the evolution rule, and it is
proved in \cite{Pro93} that it is a distributive lattice. We will
show that the set of tilings of any $k$-regular contractible polycell $P$
Theorem~\ref{th:lattice}.

Let us consider a $k$-regular contractible
polycell $P$ defined over a graph $G$, and $G'$ the
sub-graph of $G$ which contains exactly the vertices and edges in $P$.
Let us now consider the height function $\varphi_Q$ of a tiling $Q$ of $P$,
and let us define the orientation $\pi(Q)$ of $\overline{G'}$
as follows:
each undirected edge $\{v,v'\}$ in $\overline{G'}$ is directed
from $v$ to $v'$ in $\pi(Q)$ if $\varphi_Q(v') > \varphi_Q(v)$.
Then, the maximal vertices of $Q$ are exactly the ones
which have only incoming edges in $\pi(Q)$, and applying the EFG rule
to a vertex of $\pi(Q)$ is clearly equivalent to making a flip around
this vertex in $Q$. Therefore, the configuration space of the EFG is
isomorphic to the flip-accessibility graph $A_P$, which proves
Theorem~\ref{th:lattice}.

\section{Some applications.}

In this section, we study some examples which appear in the
literature with the help of our generalized framework.
We show how these classes of tiling problems can be
seen as special cases of $k$-regular contractible
polycells tilings. We therefore
obtain as corollaries some known results about these problems,
as well as some new results.

\subsection{Polycell drawn on the plane.}

Let us consider a set of vertices $V$ and a set
$\Theta$ of elementary (undirected) cycles
of length $k$, with vertices in $V$, such that
any couple of cycles in $\Theta$ have at most one
edge in common.
Not let us consider the undirected graph $G=(V,E)$ such that
$e$ is an edge of $G$ if and only if it is an edge
of a cycle in $\Theta$. Moreover, let us restrict ourselves
to the case where $G$ is a planar graph
which can be drawn in such a way that
no cycle of $\Theta$ is drawn inside another one.
$G$ is 2-dual-colorable if one can color in black and white
each bounded face
in such a way that two faces which have an
edge in common have different colors. See for example
Figure~\ref{fig_ex_2_col}.

\fig[0.3]{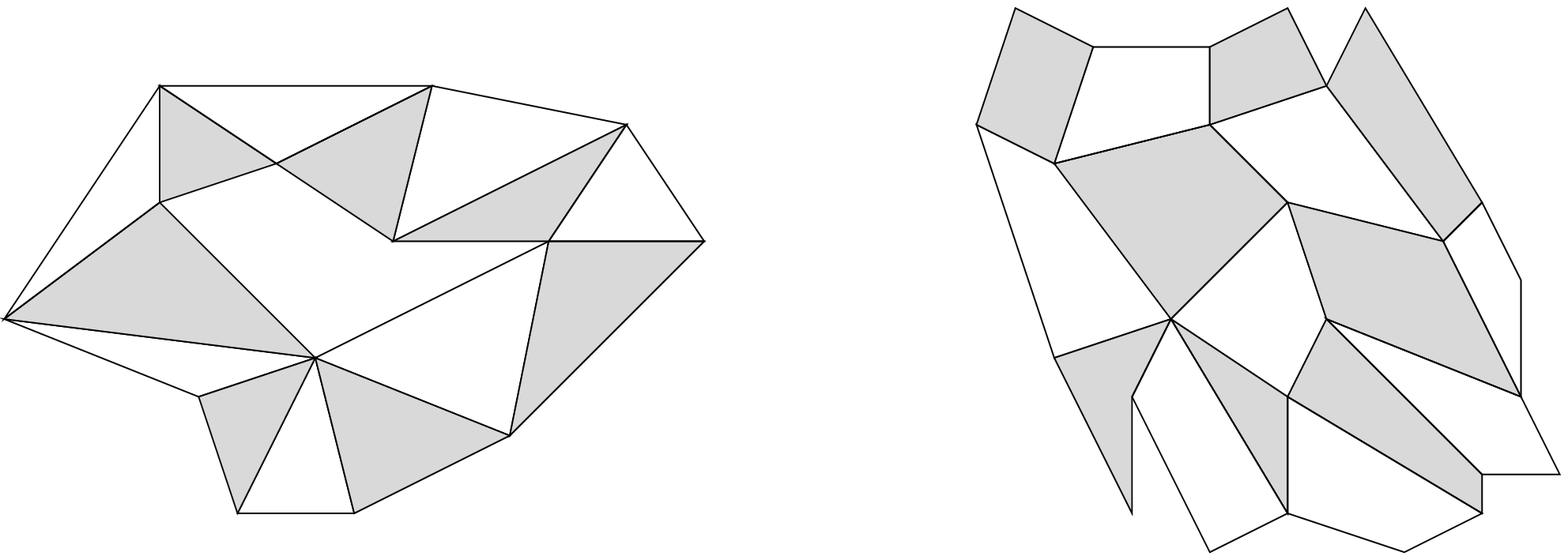}{Two examples of graphs which satisfy all the
properties given in the text. The leftmost is composed of cycles
of length $3$ and has a hole. The rightmost one is composed
of cycles of length $4$.}

\fig[0.3]{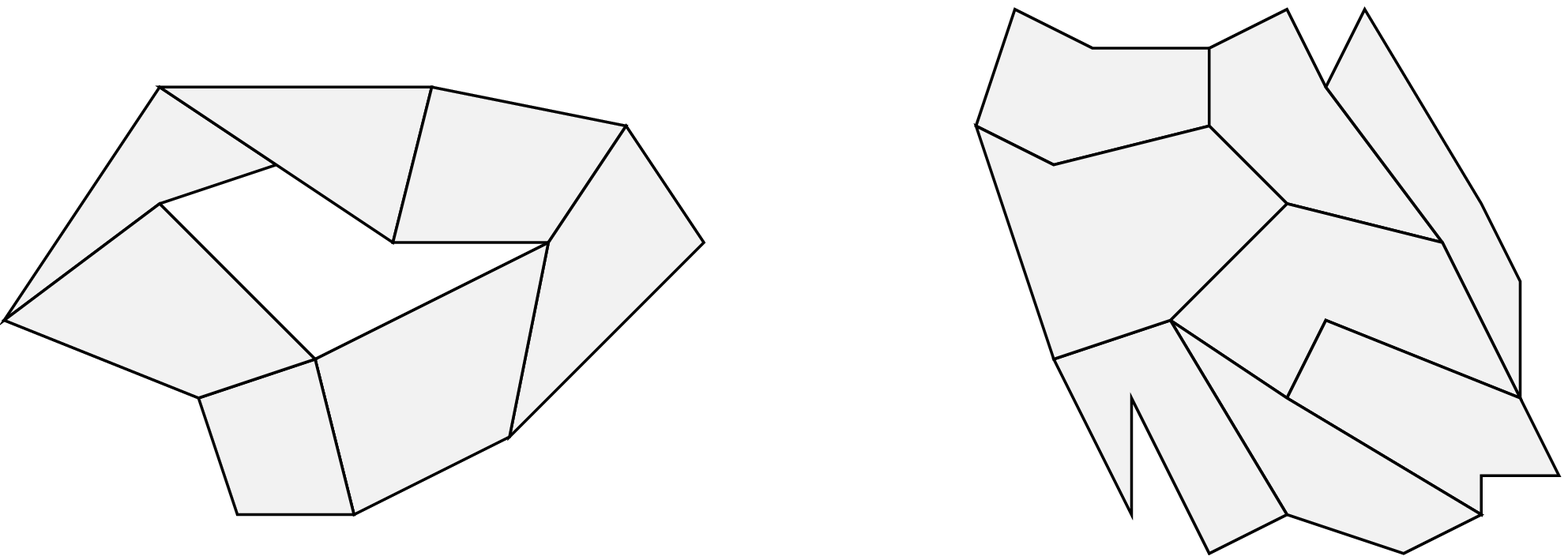}{A tiling of each of the objects
shown in Figure~\ref{fig_ex_2_col}, obtained using the
polycells formailsm.}

The fact that $G$ has the properties above, including being
2-dual-colorable, makes it possible to encode tilings with bifaces
(the tiles are two adjacent faces)
as tilings of polycells. This includes tilings with dominoes, and
tilings with calissons.
Following Thurston [Thu90], let us
define an oriented version of G as follows: the edges which constitute
the white cycles
boundaries are directed to travel the cycle in the clockwise
orientation, and the edges
which constitute the black cycles boundaries are directed
counterclockwise. One can
then verify that a balanced boundary polycell defined this way is always
contractible.
Therefore, our results can be applied, which generalizes the results
of Chaboud \cite{Cha96} and Thurston \cite{Thu90}.

\subsection{Rhombus tiling in higher dimension.}

Let us consider the canonical basis
$\{e_1,\dots,e_d\}$
of the $d$-dimensional affine space
${\mathbb R}^d$, and let us define
$e_{d+1} = \sum_{i=1}^d e_i$.
For all $\alpha$ between $1$ and $d+1$, let us define
the zonotope $Z_{d,d}^\alpha$ as the following set of points:
$$
Z_{d,d}^\alpha\ =\ \{ x\in {\mathbb R}^d
\mbox{ such that } x=\sum_{i=1,i\not= \alpha}^{d+1} \lambda_i e_i,
\mbox{ with } -1 \le \lambda_i \le 1 \}.
$$
In other words, the
$Z_{d,d}^\alpha$ is the zonotope defined by all the
vectors $\varepsilon_i$ except the $\alpha$-th.
We are interested in the tilability of a given solid $S$
when the set of allowed tiles is
$\{ Z_{d,d}^\alpha,\ 1 \le \alpha \le d+1\}$.
These tilings are called \emph{codimension one rhombus tilings},
and they are very important as a physical model of
quasicristals \cite{DMB97}. If $d=2$, they are nothing but the
tilings of regions of the plane with three parallelograms which
tile an hexagon, which have been widely studied.
See Figure~\ref{fig_ex_plane_grid} for an example in dimension
$2$, and Figure~\ref{fig_ex_til_zono_3d} for an example in
dimension $3$.

In order to encode this problem by a problem over polycells, let
us consider the directed graph $G$
with vertices in ${\mathbb Z}^d$ and 
such that $e=(x,y)$ is an edge if and only if
$y=x+\varepsilon_j$ for an integer $j$ between $1$ and $d$
or $y=x-\varepsilon_{d+1}$.
We will call {\em diagonal edges} the edges which correspond
to the second case. This graph can be viewed as a $d$-dimensional
directed grid (the direction are given by the order on the coordinates),
to which we add a diagonal edge in the reverse direction,
in each element of the grid.
An example in dimension $3$, is given
in Figure~\ref{fig_til_zono_3d_grid}.

\fig[0.3]{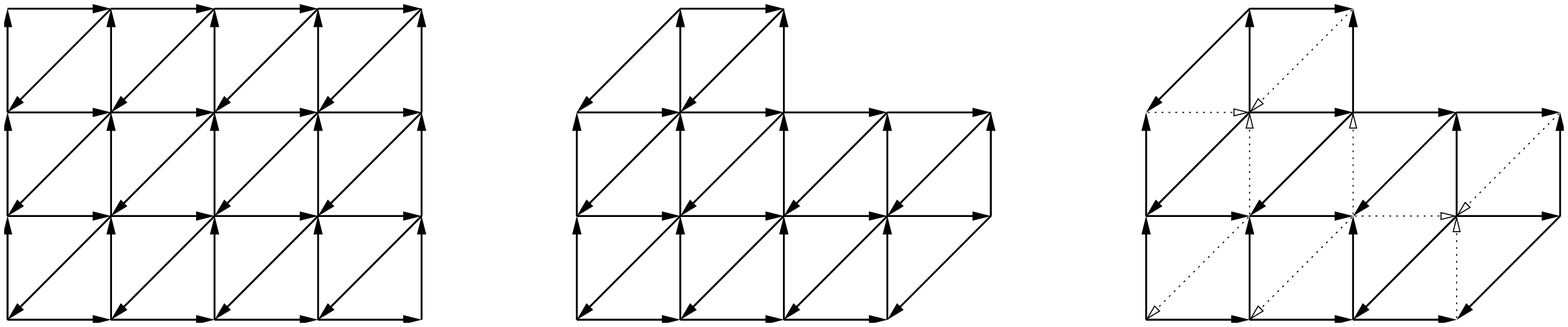}{If one forgets the
orientations and removes the dotted
edges, then the rightmost object is a classical codimencion one
rhombus tiling
of a part of the plane ($d=2$).
From the polycells point of view, the leftmost
object represents the underlying graph $G$, the middle object
represents a polucell $P$ (the boundary of which is the set of
the edges which belong to only one cell), and the rightmost object
represents a tiling of $P$ (the dottes edges are the tiling edges).}

\fig[0.3]{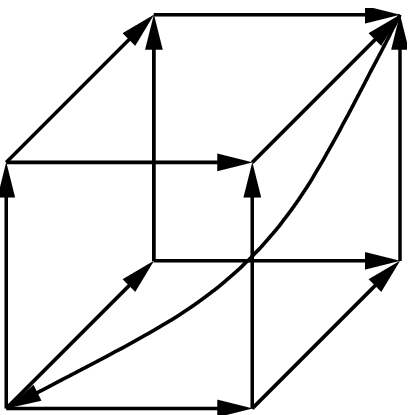}{The $3$-dimensional grid is obtained
by a concatenation of cubes like this one.}

\fig[0.5]{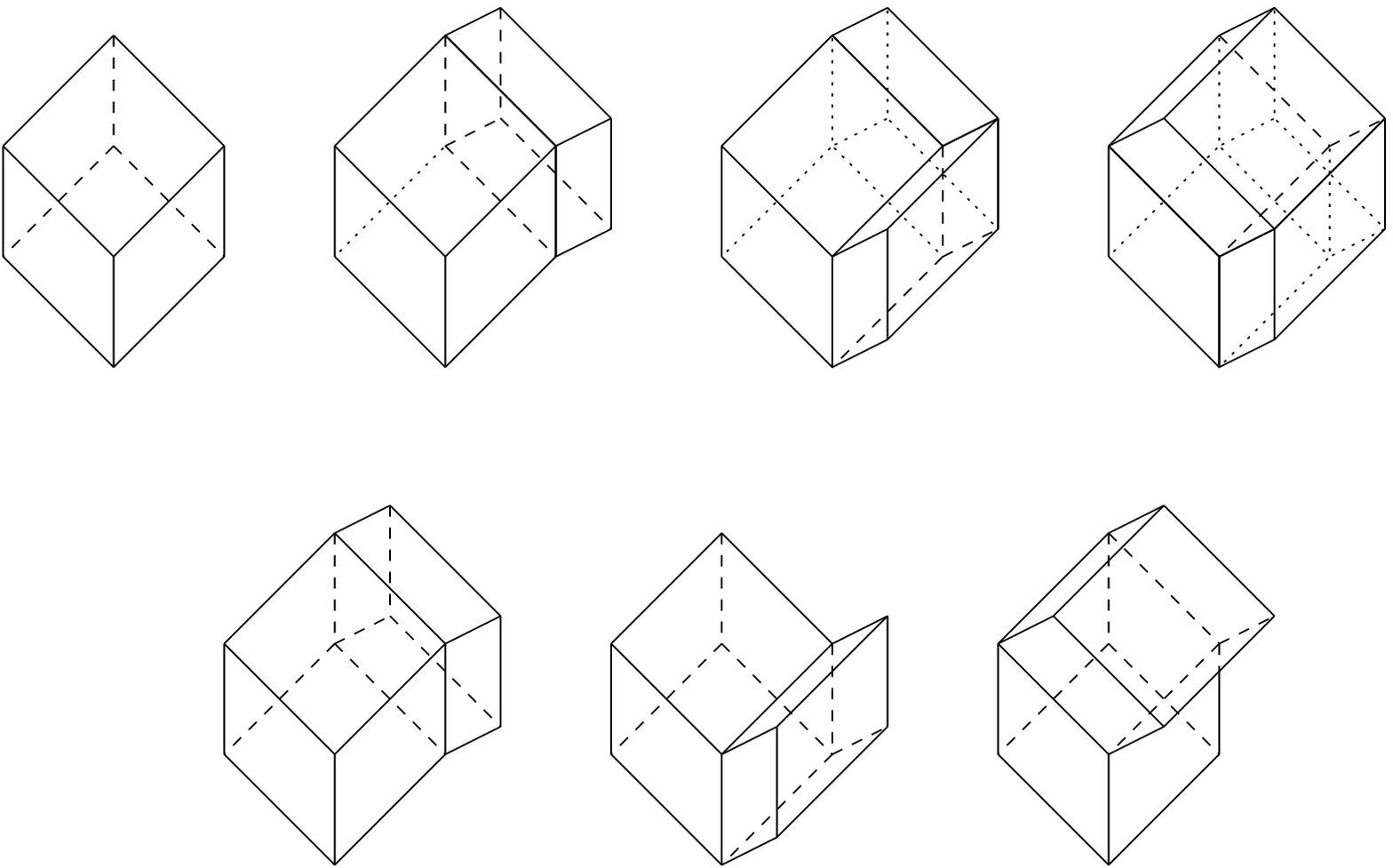}{A codimension one rhombus tiling
with $d=3$ (first line, rightmost object). It is composed
of four different three dimensional tiles, and the first
line shows how it can be constructing by adding successive
tiles. The second line shows the position of each tile with
respect to the cube.}

Each edge is clearly in a one-to-one correspondence with a
copy of a $Z_{d,d}^\alpha$ translated by an integer vector: this is
the copy on the $d$-dimensional grid of which it is a diagonal.
The set $\Theta$ of the cells we will consider is the
set of all the circuits of
length $d+1$ which contain exactly one diagonal edge. Therefore,
each edge belongs to a $d!$ cells, and so the tiles will
be themselves composed of $d!$ cells.
Given a polycell $P$ defined over $\Theta$, we define
$\partial P$ as the set of the edges of $P$ which do
not belong to $d!$ circuits of $P$.  

First notice that a full polycell defined over $G$
is always contractible.
Therefore, our previous results can be applied,
which generalizes some
results presented in \cite{DMB97} and \cite{LM99,LMN01}.
We also generalize some results about the 2-dimensional case,
which has been widely studied.

\section{Conclusion and Perspectives.}

In conclusion, we gave in this paper a generalized framework
to study the tiling problems over which a height function
can be defined. This includes the famous tilings of polyominoes
with dominoes, as well as various other classes, like
codimension one rhombus tilings, tilings on torus,
on spheres, three-dimensional tilings, and others we
did not detail here. We gave some
results on our generalized tilings which made it possible
to obtain a large set of known results as corollaries, as
well as to obtain new results on tiling problems which appear
in the scientific literature. Many other problems may exist
which can be modelized in the general framework we have introduced,
and we hope that this paper will help understanding them.

Many tiling problems, however, do not lead to the definition
of any height function. The key element to make such a function
exist is the presence of a strong underlying structure (the
$k$-regularity of the polycell, for example). Some important
tiling problems (for example tilings of zonotopes) do not have
this property, and so we can not apply our results in this context.
Some of these problems do not have the strong
properties we obtained on the tilings of $k$-regular contractible
polycells, but may be included in our framework,
since our basic definitions of polycells and tilings being very general.
This would lead to general results on more complex polycells,
for example polycells which are not $k$-regular.

\smallskip

\noindent \small
{\bf Acknowledgments:} The authors thank Fr\'ed\'eric Chavanon
for useful comments on preliminary versions, which deeply improved
the manuscript quality.

\bibliographystyle{alpha}
\bibliography{../Bib/bib.bib}

\end{document}